\newtheorem{theorem}{Theorem}
\renewcommand{\leq}{\leqslant}
\renewcommand{\geq}{\geqslant}
\newcommand{\p}{{\mathbb{P}}}
\newcommand{\e}{{\mathbb{E}}}
\newcommand{\bs}{{\bar{X}}}
\title[Moderate Deviation and Berry-Esseen Bounds in the $p$-Spin Curie-Weiss Model]{Moderate Deviation and Berry-Esseen Bounds in the $p$-Spin Curie-Weiss Model}
\author[Mukherjee]{Somabha Mukherjee} 
\address{Department of Statistics and Data Science \\ National University of Singapore, Singapore}
\email{somabha@nus.edu.sg} 
\author[Liu]{Tianyu Liu}
\address{Department of Statistics and Data Science \\ 
	National University of Singapore, Singapore }
\email{tianyu.liu@u.nus.edu}
\author[Bhattacharya]{Bhaswar B. Bhattacharya}
\address{Department of Statistics and Data Science \\ 
	University of Pennsylvania, Philadelphia, USA }
\email{bhaswar@wharton.upenn.edu} 
\begin{document}
	
	\dedicatory{Dedicated to the memory of K.~R.~Parthasarathy}
	
	\begin{abstract}
		Limit theorems for the magnetization in the $p$-spin Curie-Weiss model, for $p \geq 3$, has been derived recently by Mukherjee et al. \cite{smfl}. In this paper, we strengthen these results by proving Cram\'er-type moderate deviation 
		theorems and Berry-Esseen bounds for the magnetization (suitably centered and scaled). In particular, we show that the rate of convergence is $O(N^{-\frac{1}{2}})$ when the magnetization has asymptotically Gaussian fluctuations, and it is $O(N^{-\frac{1}{4}})$ when the fluctuations are non-Gaussian. As an application, we derive a 
		Berry-Esseen bound for the maximum pseudolikelihood estimate of the inverse temperature in $p$-spin Curie-Weiss model with no external field, for all points in the parameter space where consistent estimation is possible. 
	\end{abstract}


	\keywords{Central Limit Theorems, Stein's Method, Moderate Deviation, Spin Systems.}

	\maketitle
	
	\section{Introduction}\label{intr}

	The $p$-spin Curie-Weiss model, for $p \geq 2$, with inverse temperature $\beta > 0$ and a magnetic field $h > 0$, is a probability measure on $\{-1,1\}^N$ with mass function:
	\begin{align}\label{eq:cw}
		\p_{\beta,h,p}(\bm X) = \frac{\exp\left\{\beta N^{1-p}\sum_{1\leq i_1,\ldots,i_p\leq N} X_{i_1}\ldots X_{i_p} + h\sum_{i=1}^N X_i\right\}}{2^N Z_N(\beta,h,p)} , 
	\end{align}
	for $\bm X = ( X_1, X_2, \ldots, X_n) $. Here, 
	$Z_N(\beta,h,p)$ is the normalizing constant (also known as the partition function), which is determined  by the condition $\sum_{\bm x \in \{-1,1\}^N} \p_{\beta,h,p}(\bm x) = 1$, that is, 
	\begin{align*}
		Z_N (\beta,h,p)  = \frac{1}{2^{N}} \sum_{\bm X \in \{-1,1\}^N}  \exp \left\{ \frac{\beta}{N^{p-1}} \sum_{1 \leq i_1, i_2, \ldots, i_p \leq N} X_{i_1} X_{i_2} \cdots X_{i_p} + h \sum_{i=1}^N X_i \right \} .
	\end{align*}
	The Curie-Weiss model has been extensively studied for the case $p=2$ (see \cite{dm_gibbs_notes,ellis_book,ellis,glauber_dynamics} among several others). It corresponds to the Ising model \cite{ising} on the complete graph and is the prototypical example of a mean-field spin system with pairwise interactions. For $p \geq 3$, the $p$-spin Curie-Weiss model is a higher-order spin system where all the $p$-tuples of interactions are present (specifically, an Ising model on the complete $p$-partite hypergraph). Higher-order Ising models appear frequently in the analysis of multi-atom interactions in lattice gas models, such as the square-lattice eight-vertex model, the Ashkin-Teller model, and Suzuki's pseudo-3D anisotropic model (see \cite{ab_ferromagnetic_pspin,multispin_simulations,ferromagnetic_mean_field,pspinref1,ising_suzuki,ising_general,turban,pspinref2} and the references therein). Recently, higher-order spin systems have also found applications in statistics, for modeling peer-group effects in social networks \cite{cd_ising_II,smmpl,mlepaper}.
	%
	%

	One of the fundamental quantities of interest in an Ising model is the \textit{total magnetization} $S_N := \sum_{i=1}^N X_i$ (hereafter, simply referred to as the magnetization). 
	%
	%
	%
	%
	%
	Limit theorems for the magnetization in the 2-spin Curie-Weiss model was established in the classical work of Ellis and Newman \cite{ellis}. 
	 Berry-Esseen bounds for magnetization in 2-spin Curie-Weiss model, that is, rates of convergence in the Kolmogorov distance, were subsequently obtained by Chatterjee and Shao \cite{chatterjeeshao} and Eichelsbacher and L\"owe \cite{eichelsbacher}, using Stein's method based on exchangeable pairs. Dommers and Eichelsbacher \cite{inhomcw} later established Berry-Esseen bounds for the inhomogeneous Curie-Weiss model with external field. Similar results for the closely related Potts model was derived in \cite{eichpotts,mart1}. 
	%
	%
	
	Instead of quantifying the rate of convergence in terms of the Kolmogorov distance, one might also consider the \textit{relative error} of the tail probabilities. This is the well-understood for the sum of i.i.d. random variables through Cram\'er's classical moderate deviation theorem \cite{moderatedeviations}. Specifically, for i.i.d. random variables $Z_1, Z_2, \ldots, Z_n$ with mean $0$ and variance $1$, such that the moment generating function of $\sqrt{|Z_1|}$ is finite in a neighborhood of $0$, one has: 
	$$\frac{\p\left(\sum_{i=1}^n Z_i > x\right)}{1-\Phi(x)} = 1 + (1+x^3) O\left(\frac{1}{\sqrt{N}}\right), $$
	for $x\in [0,N^{\frac{1}{6}}]$, where $\Phi$ denotes the cumulative distribution function of the standard Gaussian (see \cite{petrov} for a proof of this result and further discussion). Cram\'er-type moderate deviation results find many applications in statistics, for instance in global and simultaneous testing, stationary processes, $L$-statistics, self-normalized sums, linear processes, among others (see, for example, \cite{Lprocess,liushao, liushao2, peligard, petrov,  shaowang, wbwu}). For the 2-spin Curie-Weiss model, Chen et. al. \cite{chenshao} derived Cram\'er-type moderate deviation theorems for the magnetization at the non-critical points using Stein's method. Moderate deviation for the critical $2$-spin Curie-Weiss model was subsequently obtained by Can and Pham \cite{vanh}, using direct arguments based on the Laplace method. 
	
	In this paper we derive moderate deviations and Berry-Esseen bounds for the magnetization in $p$-spin Curie-Weiss model, for $p \geq 3$, providing  rates of convergence for the asymptotic distributions derived in \cite{smfl}. We briefly summarize of our results below: 
	
	\begin{itemize}
		
		\item  For almost all points in the parameter space $\Theta := (0,\infty)^2$ the average magnetization is known to concentrate at a unique point and have limiting Gaussian fluctuations centered around that point (see \cite[Theorem 2.1 (1)]{smfl}). These points will be hereafter referred to as \textit{regular points}. For these points we establish a moderate deviation convergence rate of order $O(N^{-\frac{1}{2}})$. More precisely, we show that the relative error of the tail probability of the magnetization (suitably centered and scaled) and that of the Gaussian distribution at some point $x$ is bounded by $\max\{1,x^3\} O(N^{-\frac{1}{2}})$ (see Theorem \ref{thm:moderatedeviationsresult} (1)). This moderate deviation result translates to a Berry-Essen bound of rate $O(N^{-\frac{1}{2}})$ at the regular points (see Theorem \ref{thm:berryesseen} (1)).
		
		\item There is a 1-dimensional curve in the parameter space (hereafter referred to as \textit{critical points}), where the average magnetization concentrates at more than one (either two or three) points, and has limiting Gaussian fluctuations centered around each of the points, when conditioned to lie in their respective neighborhoods (see \cite[Theorem 2.1 (2)]{smfl}). For these points, similar to the regular points, we show that the moderate deviation relative error at a point $x$ is bounded by $\max\{1,x^3\} O(N^{-\frac{1}{2}})$  (see Theorem \ref{thm:moderatedeviationsresult} (2)) and the Berry-Esseen convergence rate is of order $O(N^{-\frac{1}{2}})$ (see Theorem \ref{thm:berryesseen} (2)). 
		
		\item There are also one or two points in the parameter space, depending on whether $p \geq 3$ is odd or even, respectively (hereafter referred to as \textit{special points}), where the magnetization has a non-Gaussian limiting distribution (see \cite[Theorem 2.1 (3)]{smfl}). For these points we show that the moderate deviation relative error at a point $x$ is bounded by $\max\{1,x^5\} O(N^{-\frac{1}{4}})$ (see Theorem \ref{thm:moderatedeviationsresult} (3)). This also gives a Berry-Essen bound of rate $O(N^{-\frac{1}{4}})$ at the special points (see Theorem \ref{thm:berryesseen} (3)).
		
	\end{itemize}

	The proofs for the moderate deviations result for the regular and critical points use Stein's method based on exchangeable pairs developed in \cite{chenshao}. For the special points we use Laplace's method for approximating the normalizing constant, similar to that in \cite{vanh} for the 2-spin case, although the arguments are more delicate for $p \geq 3$ due the presence of additional degeneracies (see Remark \ref{remark:moderatedeviationsrate}). The Berry-Esseen bounds follow from the moderate deviations results and the boundedness of the moments of the magnetization (suitably centered and scaled). 
	%
	%
	
	We conclude with an application in statistical inference, by deriving Berry-Esseen bounds for the maximum pseudolikelihood (MPL) estimate of $\beta$ given a single sample $\bm X$ from the model \eqref{eq:cw}  (assuming $h=0$). In particular, we prove a $O(N^{-\frac{1}{2}})$ rate of convergence (up to logarithmic factors) for the MPL estimate for all values of $\beta$ where consistent estimation is possible (see Theorem \ref{thm:berryesseenestimate}). 
	%
	%
	
	\subsection{Organization:} The rest of the paper is organized as follows. In Section \ref{Prel}, we recall the results on the asymptotic distribution of the magnetization from \cite{smfl}. 
	In Section \ref{sec:statements} we state the main results of the paper. The proofs of the main results are given Section \ref{sec:proofmainres}. We discuss some possible future directions in Section \ref{sec:extensions}. 
	

	\section{Fluctuations of the Magnetization}\label{Prel}

	In this section, we will recall from Mukherjee et al. \cite{smfl} the results on the asymptotic distribution of the total magnetization $S= S_N := \sum_{i=1}^N X_i$ in the $p$-spin Curie-Weiss model \eqref{eq:cw}.  
	The limiting distribution of $S$ depends on location of the parameters $(\beta,h)$ in the parameter space $\Theta := (0,\infty)^2$. In particular, the maximizer(s) of the following function will play an important role: 
	\begin{align}\label{Hdefn}
		H(x) = H_{\beta,h,p}(x) := \beta x^p + h x - I(x)\quad(x\in [-1,1])
	\end{align}
	where $I(x) := \frac{1}{2}\left\{(1+x)\log (1+x) + (1-x)\log (1-x)\right\}$ denotes the binary entropy function. It was shown in \cite{smfl} that the function $H$ can have one, two, or three global maximizers in the open interval $(-1, 1)$. This leads to the following partition of the parameter space $\Theta$:\footnote{For a smooth function $f: [-1, 1] \rightarrow \R$ and $x \in (-1, 1)$, the first and second derivatives of $f$ at the point $x$ will be denoted by $f'(x)$ and $f''(x)$, respectively. More generally, for $s \geq 3$, the $s$-th order derivative of $f$ at the point $x$ will be denoted by $f^{(s)}(x)$. } 
	
	\begin{enumerate} 
		
		\item The point $(\beta,h)$ is said to be $p$-{\it regular}, if the function $H_{\beta,h,p}$ has a unique global maximizer $m_* = m_*(\beta,h,p) \in (-1,1)$ and $H_{\beta,h,p}''(m_*) < 0$.\footnote{A point $m \in (-1, 1)$ is a global maximizer of $H$ if $H(m) > H(x)$, for all $x\in [-1,1]\setminus \{m\}$.} Denote the set of all $p$-regular points in $\Theta$ by $\cR_p$.

		\item The point $(\beta,h)$ is said to be $p$-{\it special}, if $H_{\beta,h,p}$ has a unique global maximizer $m_* = m_*(\beta,h,p) \in (-1,1)$ and $H_{\beta,h,p}''(m_*) = 0$.  Denote the set of all $p$-special points in $\Theta$ by $\cS_p$.

		\item The point $(\beta,h)$ is said to be $p$-{\it critical}, if $H_{\beta,h,p}$ has more than one global maximizer. Denote the set of all $p$-critical points in $\Theta$ by $\cC_p$.  
		
	\end{enumerate}

	Note that the three cases above form a disjoint partition of the parameter space $\Theta$ (observe that $H_{\beta,h,p}''$ is non-positive at the global maximizer, by the higher-order derivative test). 
	Furthermore, in \cite[Lemma B.2]{smfl} it was shown that 
	when $p \geq 3$, there is only $p$-special point in $\Theta$ and when $p \geq 4$ even, there are two $p$-special points, which are symmetric about $h=0$. Also, it was shown in \cite[Lemma B.3]{smfl} that the set of points in $\cC_p$ form a continuous $1$-dimensional curve in the parameter space $\Theta$. For a schematic of the partition of the parameter space, see Figures 6 and 7 in \cite{smfl}. 	
	The following result from \cite{smfl} gives the asymptotic distribution of the total magnetization:

	\begin{theorem}[Theorem 2.1 in \cite{smfl}]
		Fix $p \geq 3$ and $(\beta, h) \in \Theta$, and suppose $\bm X \sim \mathbb{P}_{\beta,h,p} $. Then the following hold: 
		
		\begin{itemize}
			
			\item[$(1)$] Suppose that $(\beta, h) \in \cR_p$, and denote the unique maximizer of $H$ by $m_*= m_*(\beta,h,p)$. Then, as $N \rightarrow \infty$,
			\begin{align}\label{eq:meanclt_I}
				\frac{S_N - Nm_*}{\sqrt{N}}\xrightarrow{D} N\left(0,-\frac{1}{H''(m_*)}\right).
			\end{align}

			\item[$(2)$] Suppose $(\beta, h) \in \cC_p$,  and denote the $K$ maximizers  of $H$ by $m_1:=m_1(\beta,h,p)< \ldots < m_K:=m_K(\beta,h,p)$. Then, $K \in \{2, 3\}$, and as $N \rightarrow \infty$, 
			\begin{align*}
				\frac{S_N}{N} \xrightarrow{D} \sum_{k=1}^K p_k \delta_{m_k}, 
			\end{align*}
			where for each $1\leq k\leq K$,
			\begin{align*}
				p_k := \frac{\left[(m_k^2-1)H''(m_k)\right]^{-1/2}}{\sum_{i=1}^K \left[(m_i^2-1)H''(m_i)\right]^{-1/2}}.
			\end{align*} 
			Moreover, if $A \subseteq [-1,1]$ is an interval containing $m_k$ in its interior for some $1\leq k \leq K$, such that $H(m_k) > H(x)$ for all $x\in A\setminus \{m_k\}$, then 
			\begin{align}\label{eq:meanclt_II}
				\frac{S_N - Nm_k}{\sqrt{N}}\Big\vert \{S_N/N \in A\}  \xrightarrow{D} N\left(0,-\frac{1}{H''(m_k)}\right).\end{align}

			\item[$(3)$] Suppose that $(\beta,h) \in \cS_p$, and denote the unique maximizer of $H$ by $m_*= m_*(\beta,h,p)$. Then, as $N \rightarrow \infty$, 
			\begin{align}\label{eq:meanclt_III}
				\frac{S_N - Nm_*}{N^\frac{3}{4}}\ \xrightarrow{D} F, 
			\end{align}
			where the density of $F$ with respect to the Lebesgue measure is given by 
			\begin{align}\label{eq:meanclt_g}
				\mathrm dF(x) = \frac{2}{\Gamma(\tfrac{1}{4})}\left(-\frac{H^{(4)}(m_*)}{24}\right)^{\frac{1}{4}}\exp\left(\frac{H^{(4)}(m_*)}{24} x^4\right)\mathrm d x,
			\end{align}
			with $H^{(4)}$ denoting the fourth derivative of the function $H$. 
		\end{itemize}
	\end{theorem}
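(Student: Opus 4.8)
The plan is to exploit the fact that the Hamiltonian in \eqref{eq:cw} depends on the configuration only through $S_N$, since $\sum_{1 \le i_1, \ldots, i_p \le N} X_{i_1} \cdots X_{i_p} = S_N^p$. Thus, for every integer $s$ with $s \equiv N \pmod 2$, writing $m = s/N$, one has the exact formula
\begin{align*}
\p_{\beta,h,p}(S_N = s) = \frac{1}{2^N Z_N(\beta,h,p)}\binom{N}{(N+s)/2}\,e^{N(\beta m^p + h m)},
\end{align*}
and I would apply Stirling's formula to the binomial coefficient to get, uniformly for $m$ in compact subsets of $(-1,1)$,
\begin{align*}
\frac{1}{2^N}\binom{N}{(N+s)/2} = \sqrt{\frac{2}{\pi N(1-m^2)}}\,e^{-N I(m)}\,(1 + o(1)),
\end{align*}
so that $\p_{\beta,h,p}(S_N = s)$ becomes, up to the polynomial prefactor $\sqrt{2/(\pi N(1-m^2))}$ and the normalization $Z_N$, proportional to $e^{N H(m)}$ with $H$ as in \eqref{Hdefn}. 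Because $H$ is continuous on $[-1,1]$ and has a strict maximum over its finite maximizer set, $e^{N H(m)}$ decays exponentially away from that set, so $S_N/N$ concentrates there; the three limit laws will then come out of a local expansion of $H$ around each maximizer together with a Laplace evaluation of $Z_N$.

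For a $p$-regular point, with unique maximizer $m_*$ and $H''(m_*) < 0$, I would substitute $s = N m_* + \sqrt N u$ and Taylor-expand, getting $N H(s/N) = N H(m_*) + \tfrac12 H''(m_*) u^2 + o(1)$ uniformly on compact $u$-sets, while the same change of variables in $Z_N = \sum_s \tfrac{1}{2^N}\binom{N}{(N+s)/2} e^{N(\beta m^p + h m)}$ gives, by Laplace's method, $Z_N = \tfrac{N}{2}\sqrt{2/(\pi N(1-m_*^2))}\,e^{N H(m_*)}\sqrt{2\pi/(-N H''(m_*))}\,(1+o(1))$. Dividing, cancelling the common factors, and multiplying by the lattice spacing $\sqrt N/2$ of $(S_N - N m_*)/\sqrt N$, the point masses converge to the $N(0,-1/H''(m_*))$ density; uniform-on-compacts convergence plus the exponential tail bound (Scheff\'e's lemma) then upgrades this to \eqref{eq:meanclt_I}. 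At a $p$-critical point the maximum $H^*$ of $H$ is attained at $K \in \{2,3\}$ points $m_1 < \cdots < m_K$, each with $H''(m_k)<0$; summing the $K$ Laplace contributions gives $Z_N = e^{N H^*}\sum_{k} [(1-m_k^2)(-H''(m_k))]^{-1/2}(1+o(1))$, where the weight of peak $k$ is the product of the Stirling prefactor $(1-m_k^2)^{-1/2}$ and the Gaussian factor $(-H''(m_k))^{-1/2}$, and since $(1-m_k^2)(-H''(m_k)) = (m_k^2-1)H''(m_k)$ the mass near $m_k$ tends to $p_k$, which yields $S_N/N \xrightarrow{D} \sum_k p_k \delta_{m_k}$. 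The conditional statement \eqref{eq:meanclt_II} is immediate, since on an interval $A$ where $m_k$ is the unique maximizer the computation is word-for-word the regular case.

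For a $p$-special point, $m_*$ is still the unique maximizer but $H''(m_*)=0$; I would first invoke the classification of special points (Lemma B.2 in \cite{smfl}) to conclude that also $H'''(m_*)=0$ and $H^{(4)}(m_*)<0$, so the first nonvanishing term in the Taylor expansion of $H$ at $m_*$ is the quartic one. This forces the window $N^{3/4}$: substituting $s = N m_* + N^{3/4} u$ gives $N H(s/N) = N H(m_*) + \tfrac{1}{24}H^{(4)}(m_*) u^4 + o(1)$, and the Laplace evaluation of $Z_N$ with $m - m_* = N^{-1/4} v$ produces the factor $N^{3/4}\cdot\tfrac12(-H^{(4)}(m_*)/24)^{-1/4}\Gamma(\tfrac14)$ in place of the Gaussian one. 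Dividing, using $\int_{\R} e^{-c u^4}\,\mathrm d u = \tfrac12 c^{-1/4}\Gamma(\tfrac14)$, and multiplying by the spacing $N^{3/4}/2$, the point masses of $(S_N - N m_*)/N^{3/4}$ converge to exactly the density $\mathrm dF$ in \eqref{eq:meanclt_g}, and Scheff\'e's lemma again gives \eqref{eq:meanclt_III}.

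The main obstacle I foresee is making the Stirling and Taylor expansions uniform over a range of $u$ that grows with $N$ — this is what is really needed both to bound the tails of the rescaled magnetization (for tightness, and to apply Scheff\'e) and to justify the Laplace evaluation of $Z_N$ to the required precision. In the special case this is genuinely more delicate: the quartic-only approximation of $H$ must be controlled over the wider $N^{3/4}$-window, the cubic term must be shown to vanish with enough slack for the remainder, and one must still rule out any competing contribution from the (wider, because degenerate) basin around $m_*$.
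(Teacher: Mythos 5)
Your proposal is correct and follows essentially the same route as the original proof in \cite{smfl} (and as the machinery reused in this paper's Appendix): write the exact mass function of $S_N$ through $\binom{N}{(N+s)/2}e^{N(\beta m^p+hm)}$, apply Stirling's approximation to reduce everything to $e^{NH(m)}$ with a $(1-m^2)^{-1/2}$ prefactor, establish concentration of $S_N/N$ near the maximizer(s) of $H$, and then perform a local Taylor/Laplace expansion at the appropriate scale ($\sqrt N$ with the quadratic term at regular and critical points, $N^{3/4}$ with the quartic term at the special point, using $H'(m_*)=H''(m_*)=H^{(3)}(m_*)=0$, $H^{(4)}(m_*)<0$). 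The uniformity and tail-control issues you flag are exactly the technical content handled there via the analogues of Lemma A.5 and the sum-to-integral comparison lemmas, so your outline matches the paper's argument.
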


	\section{Statements of the Main Results}\label{sec:statements} 
	
	In this paper, we derive the rates of the convergence (in the Kolmogorov distance) for the results in \eqref{eq:meanclt_I}, \eqref{eq:meanclt_II} and \eqref{eq:meanclt_III} by proving Cram\'er-type moderate deviations for the random variable 
	\begin{align}\label{eq:WN}
		W_N := \frac{S_N-Nm}{\alpha_N}
	\end{align}
	where $\alpha_N = \sqrt{N}$ if $(\beta,h)\in \cR_p \bigcup \cC_p$ and $\alpha_N = N^{\frac{3}{4}}$ if $(\beta,h)\in \cS_p$, while $m$ denotes the unique maximizer of $H$ if $(\beta,h) \in \cR_p\bigcup \cS_p$, and one of the multiple global maximizers of $H$ if $(\beta,h)\in \cC_p$. We formally state the moderate deviations result in Section \ref{sec:moderatedeviations}. The Berry-Esseen bounds are stated in Section \ref{sec:convergence}. In Section \ref{sec:estimationpl} we use this result to obtain Berry-Esseen bounds for the MPL estimate of $\beta$.


	\subsection{Moderate Deviations for the Magnetization }
	\label{sec:moderatedeviations}

	Throughout $\Phi$ will denote the standard normal distribution function. Then we have the following moderate deviations result for the total magnetization  $S_N := \sum_{i=1}^N X_i$. 
	
	
	\begin{theorem}\label{thm:moderatedeviationsresult}
		Suppose that $\bm X$ is a sample from the model \eqref{eq:cw} with $p\geq 3$. Then, there exists a constant $C>0$ such that
		\begin{enumerate}
			\item If $(\beta,h)\in \cR_p$, then for $r\in \{0,1\}$, 
			$$\frac{\p\left((-1)^r \frac{S_N-Nm_*}{\sqrt{-N/H''(m_*)}}>x\right)}{1-\Phi(x)} = 1 + (1+ x^3) O\left(\frac{1}{\sqrt{N}}\right),$$
			for all $x\in [0,CN^{\frac{1}{6}}]$, where $m_*$ is the unique global maximizer of the function $H$ defined in \eqref{Hdefn}.
			\vspace{0.2cm}
			
			\item  If $(\beta,h)\in \cC_p$, then for $r\in \{0,1\}$, 
			$$\frac{\p\left((-1)^r \frac{S_N-Nm}{\sqrt{-N/H''(m)}}>x\Bigg|~\frac{S_N}{N}\in A\right)}{1-\Phi(x)} = 1 + (1+ x^3) O\left(\frac{1}{\sqrt{N}}\right),$$
			for all $x\in [0,CN^{\frac{1}{6}}]$, where $m$ is any global maximizer of $H$, and $A$ is a neighborhood of $m$ whose closure excludes any other maximizer of $H$.
			\vspace{0.2cm}
			
			\item If $(\beta,h)\in \cS_p$, then for $r\in \{0,1\}$, 
			$$\frac{\p\left((-1)^r\frac{S_N-Nm_*}{N^{\frac{3}{4}}}>x\right)}{1-F(x)} = 1 + (1+ x^{5}) O\left(\frac{1}{N^{1/4}}\right),$$
			for all $x\in [0,CN^{\frac{1}{20}}]$, where $m_*$ is the unique global maximizer of $H$, and the density of $F$ with respect to the Lebesgue measure is given by 
			\begin{align*}
				\mathrm dF(x) = \frac{2}{\Gamma(\tfrac{1}{4})}\left(-\frac{H^{(4)}(m_*)}{24}\right)^{\frac{1}{4}}\exp\left(\frac{H^{(4)}(m_*)}{24} x^4\right)\mathrm d x,
			\end{align*}
			with $H^{(4)}$ denoting the fourth derivative of the function $H$. 
		\end{enumerate}
	\end{theorem}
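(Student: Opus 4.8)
The plan is to treat the three regimes separately, following the two distinct strategies announced in the introduction: Stein's method for exchangeable pairs (à la Chen–Fang–Shao) for the regular and critical cases, and a direct Laplace-type analysis of the partition function for the special points.

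For part (1), the regular case, I would first localize: since $m_*$ is the unique global maximizer of $H$ with $H''(m_*)<0$, standard large-deviation estimates for the magnetization (as in \cite{smfl}) show that $S_N/N$ concentrates exponentially fast around $m_*$, so up to an error that is super-polynomially small (and hence absorbed into the $O(N^{-1/2})$ term after multiplying by $1/(1-\Phi(x))$ on the range $x \le C N^{1/6}$), we may restrict attention to a small neighborhood of $m_*$. On that neighborhood I would build an exchangeable pair $(\bm X, \bm X')$ by choosing a coordinate $I$ uniformly at random and resampling $X_I$ from its conditional distribution, and compute the linearity-of-regression coefficient $\lambda$ together with the remainder terms, exactly as in the classical Curie–Weiss analysis but with the mean-field equation now reading $\tanh^{-1}(x) = p\beta x^{p-1} + h$, i.e. $H'(x)=0$. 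The conditional expectation $\e[X'_I - X_I \mid \bm X]$ will be, to leading order, $-\frac{1}{N}H'(S_N/N)/(1-\cdot)$ type expression whose derivative at $m_*$ produces the variance $-1/H''(m_*)$; Taylor-expanding $H'$ around $m_*$ gives the needed control on the nonlinearity. I would then invoke the moderate-deviation theorem for exchangeable pairs from \cite{chenshao} (their general result requires bounds on $\lambda$, on $\var(\e[(W'-W)^2\mid W])$, and on the conditional third moment), verify its hypotheses with the error terms of order $N^{-1/2}$, and conclude. The case $r=1$ follows by the symmetry $x \mapsto -x$ combined with a reflection argument, or simply by applying the same result to $-W_N$.

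Part (2), the critical case, is essentially part (1) done conditionally. Since $A$ is a neighborhood of a maximizer $m$ whose closure contains no other maximizer, the conditional law of $\bm X$ given $\{S_N/N \in A\}$ is, up to exponentially small corrections, a measure under which $S_N/N$ concentrates at the single point $m$ with $H''(m)<0$ — so one repeats the exchangeable-pair construction on this conditioned measure, noting that conditioning on the event $\{S_N/N\in A\}$ changes the dynamics only by a negligible amount because the pair stays inside $A$ with overwhelming probability. The same moderate-deviation machine then yields the stated bound.

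For part (3), the special case, the Gaussian approximation breaks down because $H''(m_*)=0$; the correct scaling is $N^{3/4}$ and the limiting density is the quartic-exponential $\mathrm dF$. Here I would write $\p((-1)^r(S_N - Nm_*)/N^{3/4} > x)$ explicitly as a ratio of sums over spin configurations, pass to the empirical mean via the standard combinatorial identity (writing the measure of $\{S_N = k\}$ in terms of $\binom{N}{(N+k)/2}$ and the Hamiltonian), and apply Stirling's formula to obtain, after the change of variables $S_N = N m_* + N^{3/4} t$, an integrand of the form $\exp(N \cdot [\,H(m_* + N^{-1/4}t) - H(m_*)\,] + \text{lower order})$. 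Because $H'(m_*)=H''(m_*)=H^{(3)}(m_*)=0$ (the third derivative must vanish too, since $m_*$ is still a maximizer with the second derivative zero — this is the "additional degeneracy" the authors flag), the exponent is $\frac{H^{(4)}(m_*)}{24}t^4 + O(N^{-1/4}(1+t^5))$ on the moderate range $t \le C N^{1/20}$, and a Laplace/Watson-lemma estimate of both numerator and denominator integrals, carefully tracking the relative error, produces the factor $1 + (1+x^5)O(N^{-1/4})$. The need to control the next-order term $H^{(5)}(m_*) t^5$ relative to the quartic $t^4$ is exactly what forces the range $x \le CN^{1/20}$ and the $x^5$ in the error.

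\medskip

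\noindent\textbf{Main obstacle.} The hardest part is the special case (3): unlike the clean Gaussian Laplace asymptotics in \cite{vanh} for $p=2$, here the vanishing of $H''$ and $H^{(3)}$ simultaneously means the quadratic and cubic terms both disappear, so the Laplace approximation is governed by a quartic, and the error analysis must be pushed to sufficient precision to isolate a relative error of order $N^{-1/4}$ uniformly over the moderate range — in particular one must show the fifth-order Taylor remainder and the Stirling correction terms combine into exactly the claimed $(1+x^5)O(N^{-1/4})$ and do not contribute a larger error. For the regular and critical parts the main (but routine) technical burden is the bookkeeping of the three remainder quantities in the Chen–Fang–Shao exchangeable-pair moderate-deviation theorem and checking they are all $O(N^{-1/2})$, which requires the higher-order smoothness of $H$ near $m_*$ and the exponential concentration of $S_N/N$.
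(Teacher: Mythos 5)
Your proposal follows essentially the same route as the paper: localization by exponential concentration followed by the exchangeable-pair moderate-deviation theorem of Chen--Fang--Shao for the regular and critical cases (with the critical case handled by conditioning on $\{S_N/N\in A\}$ and $r=1$ by negating the pair), and a Laplace-type expansion of the binomial representation with $H'(m_*)=H''(m_*)=H^{(3)}(m_*)=0$, $H^{(4)}(m_*)<0$, and the nonvanishing $H^{(5)}(m_*)$ driving the $(1+x^5)O(N^{-1/4})$ error on the range $x\le CN^{1/20}$ for the special case. This matches the paper's proof strategy in all essentials.
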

	
	The proof of Theorem \ref{thm:moderatedeviationsresult} is given in Section \ref{sec:moderatedeviationspf}. For parts (1) and (2) we apply the technique for proving moderate deviations develop in  \cite{chenshao} based on Stein's method. The proof of (3) is a more bare-hands approach based on the Laplace method-type arguments, similar to that in \cite{vanh} where moderate deviations for the case $p=2$ was obtained.  
	
	\begin{remark}\label{remark:moderatedeviationsrate}
		{\em One interesting difference in the rates for $p \geq 3$, compared to the case $p=2$, is that for special points the rate of convergence is $O(N^{-\frac{1}{4}})$ for $p \geq 3$, whereas for $p=2$ the rate of convergence is $O(\frac{1}{\sqrt N})$ (see \cite[Corollary 1.5]{vanh}).  This is because for $p = 2$ at the special point, the fifth-derivative $H^{(5)}(m_*) = 0$, unlike that for $p \geq 3$ at the special points, where $H^{(5)}(m_*) \ne 0$. This non-zero $H^{(5)}$ term determines the rate of convergence of $N^{-1/4}$ for $p\ge 3$. As a result, more refined calculations are required $p \geq 3$ at the special points. } 
	\end{remark} 
	
	\subsection{Berry-Esseen Bounds for the Magnetization}
	\label{sec:convergence}
	
	The moderate deviation results in Theorem \ref{thm:moderatedeviationsresult} coupled with the convergence of moments of $W_N$ (recall \eqref{eq:WN}) imply the following Berry-Esseen type bounds. The proof is given in Section \ref{sec:convergencepf}. 
	
	\begin{theorem}\label{thm:berryesseen}
		Suppose that $\bm X$ is a sample from the model \eqref{eq:cw} with $p\geq 3$. Then, 
		\begin{enumerate}
			\item If $(\beta,h)\in \cR_p$, then 
			$$\sup _{x \in \mathbb{R}}~\left|\p\left(\frac{S_N-Nm_*}{\sqrt{-N/H''(m_*)}}\leq x\right)-\Phi(x)\right| = O\left(\frac{1}{\sqrt{N}}\right), $$ 
			where $m_*$ is the unique global maximizer of the function $H$. 
			\vspace{0.2cm}
			
			\item  If $(\beta,h)\in \cC_p$, then for all global maximizers $m$ of $H$, 
			$$\sup _{x\in \mathbb{R}} ~\left|\p\left(\frac{S_N-Nm}{\sqrt{-N/H''(m)}}\leq x\Bigg|~\frac{S_N}{N}\in A\right) - \Phi(x)\right| = O\left(\frac{1}{\sqrt{N}}\right),$$ with $A$ being a neighborhood of $m$ whose closure excludes any other maximizer of $H$.
			\vspace{0.2cm}
			\item  If $(\beta,h)\in \cS_p$, then 
			$$\sup _{x>0} ~\left|\p\left(\frac{S_N-Nm_*}{N^{\frac{3}{4}}}\leq x \right) - F(x)\right| = O\left(\frac{1}{N^{1/4}}\right),$$  where $m_*$ is the unique global maximizer of the function $H$.
		\end{enumerate}
	\end{theorem}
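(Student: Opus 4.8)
The plan is to derive both bounds from Theorem \ref{thm:moderatedeviationsresult} by splitting $\mathbb{R}$ into a \emph{moderate deviation window}, on which that theorem applies verbatim, and a \emph{far tail}, on which crude Markov-type estimates suffice, provided one has uniform control of the moments of $W_N$. Throughout, let $W_N$ be as in \eqref{eq:WN}, set $G=\Phi$ and $a_N=N^{-1/2}$ when $(\beta,h)\in\cR_p\cup\cC_p$, and $G=F$ and $a_N=N^{-1/4}$ when $(\beta,h)\in\cS_p$, and in the critical case regard every probability as conditioned on $\{S_N/N\in A\}$. In parts (1) and (2) the limit law $\Phi$ is symmetric, so using $r=0$ and $r=1$ in Theorem \ref{thm:moderatedeviationsresult} controls the right and left tails of $W_N$ separately; hence it suffices to bound $\sup_{x\geq 0}\bigl|\p(W_N>x)-(1-G(x))\bigr|$, up to a harmless lattice correction $\p(W_N=x)=O(a_N)$ needed when $x<0$. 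For part (3) only $x>0$ is claimed, so only the right tail is needed.

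\emph{Moderate deviation window.} For $x\in[0,CN^{1/6}]$ (resp.\ $[0,CN^{1/20}]$), Theorem \ref{thm:moderatedeviationsresult} gives
\[
\bigl|\p(W_N>x)-(1-G(x))\bigr|=(1-G(x))\,(1+x^{k})\,O(a_N),\qquad k=3\ (\text{resp.\ }k=5).
\]
Since $1-\Phi(x)\leq e^{-x^{2}/2}$ for $x\geq 0$ and, by a one-line estimate on the density \eqref{eq:meanclt_g}, $1-F(x)\leq C'e^{-c'x^{4}}$ for $x\geq 0$, the product $(1-G(x))(1+x^{k})$ is bounded uniformly over $x\geq 0$. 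Hence $\bigl|\p(W_N>x)-(1-G(x))\bigr|=O(a_N)$ throughout this window.

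\emph{Far tail.} For $x>CN^{1/6}$ (resp.\ $x>CN^{1/20}$) I would use $\p(W_N>x)\leq x^{-q}\,\e|W_N|^{q}\leq C^{-q}N^{-q/6}\,\e|W_N|^{q}$ (resp.\ $C^{-q}N^{-q/20}\,\e|W_N|^{q}$) for a fixed integer $q$; taking $q\geq 3$ (resp.\ $q\geq 5$) makes the right-hand side $O(a_N)$ as soon as $\sup_N\e|W_N|^{q}<\infty$. On the same range $1-\Phi(x)\leq e^{-C^{2}N^{1/3}/2}$ and $1-F(x)\leq C'e^{-c'C^{4}N^{1/5}}$ are super-polynomially small, hence also $O(a_N)$, so $\bigl|\p(W_N>x)-(1-G(x))\bigr|=O(a_N)$ here as well. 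Combining the window with the tail (and invoking $r=1$ and the lattice correction for $x<0$ in parts (1), (2)) yields the stated Kolmogorov bounds.

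The only ingredient not already in hand is the uniform moment bound $\sup_N\e|W_N|^{q}<\infty$, and I expect this to be the main technical point. Because $\sum_{1\leq i_1,\dots,i_p\leq N}X_{i_1}\cdots X_{i_p}=S_N^{p}$, the law of $S_N$ is $\p(S_N=s)\propto\binom{N}{(N+s)/2}\exp\bigl(N[\beta(s/N)^{p}+h(s/N)]\bigr)$ on the lattice $\{-N,-N+2,\dots,N\}$; Stirling's formula together with the normalization $\sum_s\p(S_N=s)=1$ then give $\p(S_N=s)\leq C''\alpha_N^{-1}\exp\bigl(N[H(s/N)-H(m)]\bigr)$ uniformly in $s$, where $\alpha_N=\sqrt N$ for $(\beta,h)\in\cR_p\cup\cC_p$, $\alpha_N=N^{3/4}$ for $(\beta,h)\in\cS_p$, and $H$ is as in \eqref{Hdefn}. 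In the regular case, local non-degeneracy of $m_*$ together with compactness and strict global maximality yield $H(x)-H(m_*)\leq -c(x-m_*)^{2}$ on $[-1,1]$; since $N(s/N-m_*)^{2}=w(s)^{2}$ for $w(s)=(s-Nm_*)/\sqrt N$ and the values $w(s)$ are $2/\sqrt N$-spaced, $\e|W_N|^{q}\leq C'''\alpha_N^{-1}\sum_s|w(s)|^{q}e^{-cw(s)^{2}}+o(1)\leq C''''\int_{\mathbb{R}}|w|^{q}e^{-cw^{2}}\,\mathrm dw<\infty$, the $o(1)$ accounting for $s/N$ bounded away from $m_*$. In the special case one uses $H(x)-H(m_*)\leq -c(x-m_*)^{4}$ and $N(s/N-m_*)^{4}=w(s)^{4}$ for $w(s)=(s-Nm_*)/N^{3/4}$, giving $\e|W_N|^{q}\leq C''''\int_{\mathbb{R}}|w|^{q}e^{-cw^{4}}\,\mathrm dw<\infty$. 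In the critical case, conditioning on $\{S_N/N\in A\}$ localizes to a single maximizer $m$ with $H(x)-H(m)\leq -c(x-m)^{2}$ on $A$, and one divides by $\p(S_N/N\in A)$, which converges to the positive weight $p_k$ and is therefore bounded below; the regular-case estimate then applies. With this moment bound in place, the two-regime argument above completes the proof.
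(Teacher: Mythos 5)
Your two-regime decomposition -- the moderate deviation window $[0,CN^{1/6}]$ (resp.\ $[0,CN^{1/20}]$) handled by Theorem \ref{thm:moderatedeviationsresult} together with the uniform boundedness of $(1-G(x))(1+x^k)$, plus a Markov-inequality bound against the super-polynomially small Gaussian (resp.\ $F$) tail beyond the window -- is exactly the argument of Section \ref{sec:convergencepf}. Where you genuinely diverge is the moment input: the paper simply cites \cite{smfl} for the convergence of all moments of $\tilde W_N$ (resp.\ $W_N$) to those of the limit law, hence their boundedness, whereas you re-derive uniform moment bounds directly from the exact law of $S_N$, Stirling's formula, and the global estimates $H(x)-H(m)\leq -c(x-m)^2$ (regular/critical, localized to $A$ in the critical case with $\p(S_N/N\in A)$ bounded below) and $H(x)-H(m_*)\leq -c(x-m_*)^4$ (special). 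Your sketch of this is sound and buys a more self-contained proof, at the cost of some routine Laplace-type bookkeeping (in particular near $x=\pm1$, where Stirling must be replaced by the trivial bound plus $H(x)-H(m)\leq-\delta$). One small loose end: for $x<0$ in parts (1)--(2) you invoke a ``lattice correction'' $\p(W_N=x)=O(N^{-1/2})$ without proof; this is true and follows from the same Stirling estimate (the maximal point mass of $S_N$ is $O(\alpha_N^{-1})$), but note the paper sidesteps it entirely by taking the supremum over continuity points and then extending to all $x$ by right-continuity of the distribution functions -- either fix closes the argument.
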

	
	\begin{remark} {\em 
			Recently, Can and R\"ollin \cite{rollin} studied a class of  mean-field spin models where the Gibbs measure of each configuration depends only on its magnetization, which includes as a special case the $p$-spin Curie-Weiss model. They obtained rates of convergence of the magnetization in the Wasserstein distance in this general model, which, in the specific case of the $p$-spin Curie-Weiss model, for $p \geq 3$, imply rates of convergence of $O(N^{-\frac{1}{2}})$ for the regular and the critical points and of $O(N^{-\frac{1}{4}})$ at the critical points (see \cite[Theorem 5.1]{rollin}). In contrast, Theorem \ref{thm:berryesseen} gives Berry-Esseen bounds of the same orders, which do not directly follow from the Wasserstein bounds in \cite{rollin}. 
				Moreover, the moderate deviation bounds in Theorem \ref{thm:moderatedeviationsresult}, which quantifies the error in  a stronger sense than the Kolmogorov distance, are, to the best of our knowledge, the first such results for $p$-spin Curie-Weiss model, for $p \geq 3$. 
		 } 
	\end{remark}
	
	\subsection{Berry-Esseen Bounds for the Maximum Pseudolikelihood Estimate of $\beta$}\label{sec:estimationpl}
	
	The problem of parameter estimation in Ising models has been extensively studied \cite{BM16,chatterjee,comets_exp,comets,cd_ising_estimation,cd_ising_I,pg_sm,gidas,guyon,rm_sm,
			ising_testing,hdcltcov,smmpl, mlepaper,discrete_mrf_pickard}. Here, we consider estimating the parameter $\beta$ (assuming $h=0$) given a single sample $\bm X = (X_1, X_2, \ldots, X_n)$ from the model \eqref{eq:cw}. Maximum likelihood estimation in this model is often inexplicit due to the presence of the intractable normalizing constant $Z_N$. An alternative computationally feasible strategy is the \textit{maximum pseudolikelihood} (MPL) method \cite{besag_lattice,besag_nl}, which leverages the fact that the conditional distributions $\p_{\beta,0,p}(X_i|(X_j)_{j\ne i})$ have simple explicit forms, for $1\leq i\leq N$. In particular, the MPL estimate of $\beta$ in the model \eqref{eq:cw} (with $h=0$) is obtained as follows: 
	$$\hat{\beta} := \arg \max_{\beta\in \mathbb{R}} L(\beta|\bm X) , $$
	where  
	$$L(\beta|\bm X) := \prod_{i=1}^N \p_{\beta,0,p}\left(X_i|(X_j)_{j\ne i}\right)$$ is the \textit{pseudolikelihood} function. 
	
	From results in \cite{smmpl} we know that there exists $\beta^*(p) >0$ such that for for $\beta>\beta^*(p)$, the MPL estimator $\hat{\beta}$ is $\sqrt{N}$-consistent, whereas for $\beta < \beta^*(p)$, no consistent estimate of $\beta$ exists. The threshold $\beta^*(p)$ is given by:
	$$\beta^*(p) := \inf \left\{\beta \geq 0:~ \sup_{x\in [0,1]} H_{\beta,0,p}(x) > 0\right\}.$$
	In fact,  \cite[Theorem 2.14]{smmpl} shows that for all $\beta > \beta^*(p)$, 
	$$\sqrt{N}(\hat{\beta}-\beta) \xrightarrow{D} N\left(0, -\frac{H_{\beta,0,p}''(m_*)}{p^2 m_*^{2p-2}}\right) , $$ where $m_*$ is the unique positive global maximizer of $H_{\beta,0,p}$. The next result establishes a Berry-Esseen bound for this weak convergence. The proof is given in Section \ref{sec:convergencepf}. 
	
	\begin{theorem}\label{thm:berryesseenestimate}
		Suppose $\hat \beta$ is the MPL estimate of $\beta$ in the model $\p_{\beta,0,p}$, with $p\geq 3$. Then for $\beta > \beta^*(p)$, 
		$$\sup_{x\in \mathbb{R}}~\left|\p\left(\sqrt{N}(\hat{\beta}-\beta) \leq x\right) - \p\left(N\left(0, -\frac{H_{\beta,0,p}''(m_*)}{p^2 m_*^{2p-2}}\right) \leq x\right)\right| = O\left(\frac{\log N}{\sqrt{N}}\right) . $$
	\end{theorem}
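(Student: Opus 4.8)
The plan is to relate the estimation error $\sqrt{N}(\hat\beta - \beta)$ to the fluctuations of the magnetization $S_N$, for which we already have a Berry-Esseen bound from Theorem \ref{thm:berryesseen}(1), and then transfer that bound through a delta-method-type argument. First I would write down the stationarity condition for the pseudolikelihood. Differentiating $\log L(\beta \mid \bm X)$ and using the explicit conditional distributions $\p_{\beta,0,p}(X_i \mid (X_j)_{j \ne i})$, the score equation $\partial_\beta \log L(\hat\beta \mid \bm X) = 0$ can be written (as in \cite{smmpl}) in the form
\begin{align*}
\sum_{i=1}^N \left\{ X_i - \tanh\!\left(p \hat\beta\, \overline{X}_i^{\,p-1}\right) \right\} \overline{X}_i^{\,p-1} = 0,
\end{align*}
where $\overline{X}_i$ denotes the appropriate local average. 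Since $h = 0$ and $\beta > \beta^*(p)$, the point $(\beta, 0)$ lies in $\cR_p$ (up to the $\pm m_*$ symmetry), so $\overline{X} := S_N/N$ concentrates near $\pm m_*$, and one restricts attention to the event that $\overline{X}$ is near its sign — the two symmetric branches contribute identically to the distribution of $|\hat\beta - \beta|$, and one handles the sign bookkeeping exactly as in \cite{smmpl}.

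Next I would Taylor-expand the score equation around $\beta$ and around $\overline{X} = m_*$. Writing $g(\beta, t)$ for the (normalized) score as a function of the parameter and of $t = \overline{X}$, we have $g(\hat\beta, \overline{X}) = 0$ and, by the consistency and asymptotic normality results of \cite{smmpl}, $g(\beta, m_*) = 0$ with $\partial_\beta g(\beta, m_*) \ne 0$. A first-order expansion then yields
\begin{align*}
\sqrt{N}(\hat\beta - \beta) = -\frac{\partial_t g(\beta, m_*)}{\partial_\beta g(\beta, m_*)} \cdot \sqrt{N}(\overline{X} - m_*) + (\text{remainder}),
\end{align*}
and one checks that the leading coefficient is precisely the constant that makes the limiting variance equal to $-H_{\beta,0,p}''(m_*)/(p^2 m_*^{2p-2})$, consistent with \cite[Theorem 2.14]{smmpl}. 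Thus $\sqrt{N}(\hat\beta - \beta)$ equals a fixed linear function of $(S_N - Nm_*)/\sqrt{N}$ plus an error term that is controlled by higher-order Taylor terms times powers of $\overline{X} - m_*$, plus the probability of the (exponentially small) event that $\overline{X}$ escapes a neighborhood of $m_*$.

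The transfer of the Kolmogorov bound then goes as follows. For a fixed nonzero linear map, $\sup_x |\p(aZ_N \le x) - \p(a\mathcal{N} \le x)| = \sup_x |\p(Z_N \le x) - \p(\mathcal{N} \le x)|$, so the linear part inherits the $O(N^{-1/2})$ rate from Theorem \ref{thm:berryesseen}(1). To absorb the remainder into the Kolmogorov distance one uses the standard smoothing inequality: if $\sqrt{N}(\hat\beta - \beta) = aW_N + R_N$ with $\p(|R_N| > \delta_N)$ small and the limiting density bounded, then
\begin{align*}
\sup_x \left| \p\big(\sqrt{N}(\hat\beta-\beta) \le x\big) - \p(a\mathcal{N} \le x) \big) \right| \le \sup_x \left| \p(aW_N \le x) - \p(a\mathcal{N} \le x) \right| + C\delta_N + \p(|R_N| > \delta_N).
\end{align*}
Here the moderate deviation bound of Theorem \ref{thm:moderatedeviationsresult}(1) is what controls $\p(|R_N| > \delta_N)$: the remainder $R_N$ is a polynomial in $\overline{X} - m_*$ of degree $\ge 2$ times bounded factors, so $|R_N| \lesssim (\overline{X}-m_*)^2 \sqrt{N} = W_N^2/\sqrt{N}$ on the good event, and choosing $\delta_N \asymp (\log N)/\sqrt{N}$ makes $\p(|W_N| > \sqrt{\log N})$ of order $N^{-1/2}$ (in fact smaller) by the Gaussian tail together with the moderate deviation relative-error estimate, which is valid up to $x \asymp N^{1/6}$ and hence certainly at $x \asymp \sqrt{\log N}$. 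The logarithmic loss in the final rate is exactly this $\delta_N \asymp (\log N)/\sqrt{N}$ truncation level. The main obstacle I expect is the bookkeeping in the second step: controlling the remainder uniformly requires showing that the relevant derivatives $\partial_\beta g, \partial_t g, \partial_t^2 g$ do not degenerate on a neighborhood of $(\beta, m_*)$ — in particular that $\partial_\beta g(\beta, m_*) \ne 0$, which is where the hypothesis $\beta > \beta^*(p)$ enters — and handling the event where $\hat\beta$ itself is far from $\beta$ (so that the Taylor expansion is not a priori valid), which must be shown to have probability $O(N^{-1/2})$ using the concentration of $\overline{X}$ and the strict concavity of the population pseudolikelihood at $m_*$, again drawing on \cite{smmpl}.
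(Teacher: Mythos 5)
Your proposal follows essentially the same route as the paper: your score equation solves explicitly to $\hat\beta = g(\bar{X})$ with $g(t) = p^{-1}t^{1-p}\tanh^{-1}(t)$, and the paper likewise Taylor-expands $\sqrt{N}\bigl(g(\bar{X})-g(m_*)\bigr)$ around the maximizer, transfers the Berry-Esseen bound of Theorem \ref{thm:berryesseen} to the linear-plus-quadratic part (Lemmas \ref{aprxm1} and \ref{beressquad}), obtains the $\log N$ factor from a $\sqrt{\log N}$-level truncation of the quadratic term, and removes the conditioning on a neighborhood of $m_*$ using the exponentially small escape probability, just as you describe. The only caveat in your version is quantitative: since the limiting variance $-1/H''(m_*)$ may exceed $1$, the claim that $\p(|W_N|>\sqrt{\log N})=O(N^{-1/2})$ is not automatic, so you must take the truncation level $\sqrt{C\log N}$ (i.e.\ $\delta_N = C\log N/\sqrt{N}$) with $C$ large enough depending on $H''(m_*)$ --- the paper handles this by truncating the limiting Gaussian at $\alpha_N=\sqrt{-\log N/H''(m)}$ --- which leaves the final rate unchanged.
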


	\section{Proofs of the Main Results}\label{sec:proofmainres}
	
	In this section, we prove the results stated in Section \ref{sec:statements}. We start by proving the moderate deviation results.
	
	\subsection{Proof of Theorem \ref{thm:moderatedeviationsresult}}
	\label{sec:moderatedeviationspf}
	
	We will prove parts (1) and (2) together. For this we will use the following result, which is a direct consequence of the results in \cite{chenshao} (see Theorem 3.1 and Remark 3.2 in \cite{chenshao}). 
	
	\begin{theorem} \cite{chenshao} \label{thm:moderatedeviations}
		Suppose that $(Y,Y')$ is an exchangeable pair of random variables satisfying:
		\begin{align}\label{eq:Y} 
			\e(Y-Y'|Y) = \lambda(Y-\e(R|Y))
		\end{align} 
		for some random variable $R$ and some constant $\lambda \in (0,1)$. Let $\Delta := Y'-Y$, $D:= \frac{\Delta^2}{2\lambda}$ and assume that:
		\begin{enumerate}
			\item $|\Delta|\leq \delta$ for some $\delta>0$,
			
			\item There exists $\delta_1>0$ such that $|\e(D|Y)-1| \leq \delta_1(1+|Y|)$,
			
			\item There exist  $\delta_2>0$ and $\alpha <1$ such that $|\e(R|Y)| \leq \delta_2(1+Y^2)$ and $\delta_2|Y|\leq \alpha$,
			
			\item There exists  $\theta\geq 1$ such that $\e(D|Y) \leq \theta$.
		\end{enumerate}
		Then,  for $0 \leq x \leq \theta^{-1}\min\{\delta^{-\frac{1}{3}},\delta_1^{-\frac{1}{3}},\delta_2^{-\frac{1}{3}}\}$,    
			\begin{align}\label{eq:moderatedeviationsrate}
				\frac{\p(Y>x)}{1-\Phi(x)} = 1+ C_\alpha \theta^3(1+x^3)(\delta+\delta_1+\delta_2) , 
			\end{align}
			where $C_\alpha > 0$ is a constant depending only on $\alpha$. \qed
	\end{theorem}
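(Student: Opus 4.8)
\smallskip

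The plan is to follow the general programme of \cite{chenshao} for passing from a Stein-type identity to moderate deviations. First I would extract an approximate Gaussian Stein identity, with an explicit error term, from the exchangeable pair. For any bounded measurable $f$, exchangeability gives $\e[(Y'-Y)(f(Y)+f(Y'))]=0$; writing $f(Y')-f(Y)=\Delta f'(Y)+\tfrac12\Delta^2 f''(\xi)$ with $\xi$ between $Y$ and $Y'$, and using the linear regression hypothesis \eqref{eq:Y} together with $\e(\Delta^2\mid Y)=2\lambda\,\e(D\mid Y)$, one obtains
\begin{align*}
\e[Y f(Y)] \;=\; \e\big[\e(D\mid Y)\,f'(Y)\big] \;+\; \e\big[\e(R\mid Y)\,f(Y)\big] \;+\; \frac{1}{4\lambda}\,\e\big[\Delta^3\,f''(\xi)\big].
\end{align*}
Writing $\e(D\mid Y)=1+(\e(D\mid Y)-1)$ this reads $\e[Yf(Y)]=\e[f'(Y)]+E_1+E_2+E_3$, where hypothesis~(2) controls $E_1$ through $\delta_1$, hypothesis~(3) controls $E_2$ through $\delta_2$, and $|\Delta|\le\delta$ together with $|\Delta|^3\le\delta\Delta^2=2\lambda\delta D$ controls $E_3$ through $\delta$. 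A preliminary observation is that $\delta_2|Y|\le\alpha<1$ forces $|Y|\le\alpha/\delta_2$ almost surely, so $Y$ is bounded and every exponential moment below is finite; taking $f(w)=w$ above already yields the a priori bound $\e[Y^2]\le(\e(D)+\alpha)/(1-\alpha)\le(\theta+\alpha)/(1-\alpha)$.

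Next I would specialize to the exponential test functions $f(w)=e^{sw}$, $s\ge0$, turning the identity into a differential relation for the moment generating function $M(s):=\e[e^{sY}]$, schematically $M'(s)=sM(s)+sE_1(s)+E_2(s)+s^2E_3(s)$. A crude first pass using only hypothesis~(4), namely $\e(D\mid Y)\le\theta$, gives $M(\pm s)\le e^{\theta s^2/2}$; feeding this back, the moment quantities $\e[|Y|^k e^{sY}]$ (which are derivatives of $M$) appearing in $E_1,E_2,E_3$ are controlled, the factor $e^{s\delta}$ produced by $|\xi-Y|\le\delta$ in $E_3$ is $O(1)$ on the admissible range $s\le\theta^{-1}\delta^{-1/3}$, and one arrives at an estimate of the form
\begin{align*}
\Big|\,\frac{d}{ds}\log M(s)-s\,\Big| \;\leq\; C_\alpha\,\theta^{3}\,(1+s^2)\,(\delta+\delta_1+\delta_2), \qquad 0\le s\le\theta^{-1}\min\{\delta^{-1/3},\delta_1^{-1/3},\delta_2^{-1/3}\}.
\end{align*}
Integrating from $0$ to $s$ then gives the key intermediate estimate $\log M(s)=\tfrac{s^2}{2}+C_\alpha\,\theta^{3}(1+s^3)(\delta+\delta_1+\delta_2)$ on the same range, i.e. $M(s)$ is close, in the relative sense, to the standard Gaussian moment generating function $e^{s^2/2}$.

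Finally I would convert this moment generating function estimate into the tail ratio \eqref{eq:moderatedeviationsrate} by the classical Cram\'er tilting argument: introduce the conjugate family $d\p^{(s)}=e^{sY}/M(s)\,d\p$, write $\p(Y>x)=M(s)\,\e^{(s)}\big[e^{-sY}\ind(Y>x)\big]$, and choose the near-optimal tilt $s=x$ (the right choice for an approximately standard Gaussian $Y$). The estimate above controls $M(x)$, and re-running the Stein identity under $\p^{(x)}$, equivalently invoking the accompanying quantitative normal approximation of \cite{chenshao}, controls the law of $Y$ under $\p^{(x)}$ well enough to evaluate $\e^{(x)}\big[e^{-xY}\ind(Y>x)\big]$ against the Gaussian; combined with $1-\Phi(x)\sim\tfrac{1}{x\sqrt{2\pi}}e^{-x^2/2}$ this produces $\p(Y>x)/(1-\Phi(x))=1+C_\alpha\theta^3(1+x^3)(\delta+\delta_1+\delta_2)$. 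The restriction $0\le x\le\theta^{-1}\min\{\delta^{-1/3},\delta_1^{-1/3},\delta_2^{-1/3}\}$ is exactly what keeps the tilt $s=x$ inside the range of the previous step and forces the error term to be $o(1)$.

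The main obstacle is this last step. Transferring \emph{multiplicative} (relative-error) control from the moment generating function to the tail probability is not a soft matter: it requires a quantitative Gaussian approximation under the tilted measure with the error tracked uniformly in the tilt, and essentially all of the bookkeeping that produces precisely the $\theta^{3}(1+x^{3})$ dependence and the stated range of $x$ takes place here. A secondary technical point is the careful treatment, already in the first step, of the remainder $\e[\Delta^3 f''(\xi)]$ at the random intermediate point $\xi$, which must be arranged so that the unavoidable $e^{s\delta}$ factor does not degrade the range of validity.
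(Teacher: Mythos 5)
The paper itself offers no proof of this statement: it is imported wholesale, with only a pointer to Theorem 3.1 and Remark 3.2 of \cite{chenshao}, so the only meaningful comparison is with the argument in that source. Your first two steps do track that argument. Chen, Fang and Shao indeed derive from the exchangeable pair an approximate Stein identity of the form $\e[Yf(Y)]=\e[\e(D\mid Y)f'(Y)]+\e[\e(R\mid Y)f(Y)]+\text{remainder}$, and they obtain exponential-moment bounds by inserting $f(w)=e^{sw}$, essentially as you describe (including the use of $|\Delta|^3\le\delta\Delta^2=2\lambda\delta D$ and the harmlessness of the $e^{s\delta}$ factor on the admissible range). Where you diverge is the third step, and that is also where the genuine gap sits. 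The actual proof does not use Cram\'er tilting at all: it estimates $\p(Y>x)-(1-\Phi(x))$ directly, by applying the Stein identity to the solution $f_x$ of the Stein equation for the test function $\ind\{w>x\}$, and then bounds each error term \emph{relative to} $1-\Phi(x)$ using the exponential-moment bounds together with precise pointwise estimates on $f_x$ and $f_x'$; the $\theta^3(1+x^3)$ dependence and the restriction $x\le\theta^{-1}\min\{\delta^{-1/3},\delta_1^{-1/3},\delta_2^{-1/3}\}$ come out of exactly that bookkeeping.

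Your proposed substitute for this step is only asserted, and as stated it does not go through from what you have established. Relative closeness of $\log M(s)$ to $s^2/2$ on a bounded range of $s$ does not by itself yield relative closeness of tail probabilities; to evaluate $\e^{(x)}\bigl[e^{-xY}\ind(Y>x)\bigr]$ with the required precision you need a quantitative normal approximation for $Y$ under the tilted measure $\p^{(x)}$, with error tracked uniformly in the tilt. Nothing in hypotheses (1)--(4) transfers to $\p^{(x)}$: the pair $(Y,Y')$ is exchangeable under $\p$, not under the tilted law, so ``re-running the Stein identity under $\p^{(x)}$'' is not available without a new construction, and ``invoking the accompanying quantitative normal approximation of \cite{chenshao}'' is circular, since a statement of that strength is essentially the theorem you are trying to prove. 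So the decisive step --- the one you yourself flag as containing essentially all of the work --- is missing, and the route you sketch for it would require a substantial additional (and here unproven) input, whereas the source avoids the issue entirely by working with the Stein-equation solution $f_x$ instead of a change of measure.
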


	Returning to the main proof, assume, to begin with, $r=0$. Let us define:
	\begin{align}\label{eq:RCWN}
		\tilde{W}_N := 
		\left\{
		\begin{array}{cc}
			\sqrt{-H''(m)} W_N & \text{if} ~(\beta,h)\in \cR_p   \\ 
			\sqrt{-H''(m)} W_N \Big|~\{S_N/N\in A\} & \textrm{if}~(\beta,h)\in \cC_p    
		\end{array} ,
		\right.
	\end{align}
	where $W_N := \frac{1}{\sqrt N}(S_N-Nm)$, with $m$ denoting a global maximizer of $H$, and $A$ denoting a neighborhood of $m$ whose closure excludes any other maximizer of $H$. It follows from the proofs of Lemmas 3.1 and 3.3 in \cite{mlepaper} that for every constant $K>0$, there exists a constant $L = L_K>0$ such that 
	\begin{equation*}
		\p\left(|\tilde{W}_N| > K\sqrt{N}\right) \leq e^{-LN} . 
	\end{equation*}
	Therefore, for every constant $K>0$, 
	\begin{eqnarray*}
		\frac{\p(\tilde{W}_N > x)}{1-\Phi(x)} &=& 
		\frac{\p\left(\tilde{W}_N > x \Big|~|\tilde{W}_N|\leq K\sqrt{N}\right) (1-O(e^{-LN}))}{1-\Phi(x)} + \frac{O(e^{-LN)}}{1-\Phi(x)}\\&=&  \frac{\p\left(\tilde{W}_N > x \Big|~|\tilde{W}_N|\leq K\sqrt{N}\right)}{1-\Phi(x)} + \frac{O(e^{-LN)}}{1-\Phi(x)}.
	\end{eqnarray*}
	Hence, in order to prove Theorem \ref{thm:moderatedeviationsresult}, it suffices to prove the same theorem with $\tilde{W}_N$ replaced by $T = T_N := \tilde{W}_N|\{|\tilde{W}_N| \leq K\sqrt{N}\}$ for some constant $K>0$ to be determined later. Note that the condition $\tilde{W}_N \leq K\sqrt{N}$ is equivalent to the inequality $$\left|\frac{S_N}{N} - m\right| \leq \frac{K}{\sqrt{-H''(m)}} . $$ 
	Hence, for $K$ sufficiently small, we may assume that the conditioned state space of the spin configuration $\bm X$ is such that $S_N/N \in [a,b]$, where $(a,b)$ is an interval containing exactly one maximizer $m$ of $H$. Throughout the rest of the proof, let us denote $\sigma^2 := -N/H''(m)$ and with $\bs := S/N$.
	
	Let $I$ be denote a uniformly distributed random variable on the set $\{1,\ldots,N\}$ independent of $\bm X$, and given $I=i$, let $\bm X'$ denote the random vector obtained from $\bm X$ by replacing $X_i$ with $X_i'$ generated from the conditional distribution of $X_i$ given $(X_j)_{j\ne i}$. Let  $T'$ denote the version of the statistic $T$ evaluated at $\bm X'$, i.e. $$T' = T + \frac{X_I' - X_I}{\sigma}~.$$ 
	Then, $(T,T')$ is an exchangeable pair (see \cite{smmpl}). Now, denote the events 
		$$\cE_a= \{S-2 \geq aN\} \quad \cE_b = \{S+2 \leq bN\}, $$
		and define 
		$$U := \frac{2}{N\sigma}\cdot  \frac{e^{-\beta p  \bs^{p-1} - h}}{2\cosh(\beta p  \bs^{p-1} + h)}\cdot \frac{N}{2}(1+\bs) \bm 1\{\cE_a\} \text{ and } V := \frac{2}{N\sigma}\cdot  \frac{e^{\beta p  \bs^{p-1} + h}}{2\cosh(\beta p  \bs^{p-1} + h)}\cdot \frac{N}{2}(1-\bs)\bm 1\{\cE_b\}.$$
		Then we have: 
		\begin{align}\label{eq:TX}
			\e(T-T'|\bm X) &= \frac{1}{\sigma} \e(X_I - X_I'|\bm X) \nonumber \\
			& = \frac{2}{N\sigma} \sum_{i=1}^N \left[\bm{1}(X_i=1)\p(X_i'=-1|\bm X) - \bm{1}(X_i=-1)\p(X_i'=1|\bm X)\right] \nonumber \\
			& = U-V . 
	\end{align} 
	%
	%
	Note that $$\frac{N}{2}(1+\bs) = \frac{1}{2}(\sigma T + Nm +N)\quad\text{and}\quad \frac{N}{2}(1-\bs) = \frac{1}{2}(N-\sigma T - Nm).$$
	Denote 
	$$A(T) := \frac{\exp\left(-\beta p \left(\frac{\sigma T}{n} + m\right)^{p-1}-h\right)}{2\cosh\left(\beta p \left(\frac{\sigma T}{N} + m\right)^{p-1}+h\right)} \quad\text{and}\quad B(T) := \frac{\exp\left(\beta p \left(\frac{\sigma T}{N} + m\right)^{p-1}+h\right)}{2\cosh\left(\beta p \left(\frac{\sigma T}{n} + m\right)^{p-1}+h\right)}~.$$
	 Then \eqref{eq:TX} can we expressed as,  
		\begin{eqnarray*}
			\e(T-T'|\bm X) &=& \frac{2}{\sigma}\cdot \frac{\sigma T + Nm + N}{2N}\cdot A(T) \bm 1\{\cE_a\} - \frac{2}{\sigma}\cdot \frac{N-\sigma T- Nm}{2N}\cdot B(T) \bm 1\{\cE_b\}\\&=& (A(T) + B(T))\left(\frac{T}{N} + \frac{m}{\sigma}\right) + \frac{1}{\sigma} (A(T)-B(T)) + Q, 
			\\&=& \left(\frac{T}{N} + \frac{m}{\sigma}\right) -\frac{1}{\sigma} \tanh\left(\beta p \left(\frac{\sigma T}{N} + m\right)^{p-1} + h\right) + Q , 
		\end{eqnarray*} 
		where 
		$$ Q: = -\frac{\sigma T + Nm + N}{N\sigma} A(T) \bm 1\{\cE_a^c\} + \frac{N-\sigma T - Nm}{N\sigma} B(T) \bm 1\{\cE_b^c\}.$$ 
	 By a $2$-term Taylor expansion of the function $$f(x) := \tanh\left(\beta p(x+m)^{p-1} + h\right)$$ around $0$, we have:
		\begin{align*}
			& \tanh\left(\beta p \left(\frac{\sigma T}{N} + m\right)^{p-1} + h\right) \nonumber \\ 
			& = \tanh(\beta p m^{p-1} + h) + \frac{\sigma T}{N} \tanh'(\beta p m^{p-1} + h) \beta p(p-1)m^{p-2} + \frac{\sigma^2 T^2}{2N^2} \tilde Q , 
		\end{align*}
		where 
		$$\tilde Q := \tanh''(\beta p(m+\xi)^{p-1}+h)\left(\beta p(p-1)(m+\xi)^{p-2}\right)^2 + \tanh'(\beta p (m+\xi)^{p-1} + h)\beta p(p-1)(p-2)(m+\xi)^{p-3},$$
		for some $\xi \in (0,\sigma T/N)$. Hence, noting that $\tanh(\beta p m^{p-1}+h) = m$, gives, 
		\begin{align}\label{eq:stein31}
			\e(T-T'|\bm X) = \lambda(T-R)
		\end{align}
		where 
		\begin{align}
			\label{eq:lambda} 
			\lambda & := \frac{1}{N}\left(1-\tanh'(\beta p m^{p-1}+h) \beta p(p-1)m^{p-2}\right) , \\ 
			\label{eq:R} 
			R & := \frac{\sigma T^2}{2N^2\lambda} \tilde Q + \frac{S + N}{N\sigma\lambda} A(T) \bm 1\{\cE_a^c\} - \frac{N-S}{N\sigma\lambda} B(T) \bm 1\{\cE_b^c\} . 
	\end{align}  
	Next, differentiating both sides of the identity $H(x) = \beta x^p + hx-I(x)$ gives, 
	$$\tanh^{-1}(x) = \beta p x^{p-1} + h - H'(x) \quad \implies\quad x = \tanh\left(\beta p x^{p-1} +h - H'(x)\right).$$ Differentiating the right hand side identity further, 
	$$\tanh'\left(\beta p x^{p-1} + h - H'(x)\right) \left(\beta p(p-1) x^{p-2} - H''(x)\right) = 1.$$
	Now, putting $x=m$ in the above identity gives, 
	$$\tanh'(\beta p m^{p-1} + h)= \frac{1}{\beta p(p-1) m^{p-2} - H''(m)}.$$ Hence, 
	$$\lambda = \frac{1}{N}\cdot \frac{H''(m)}{H''(m) - \beta p(p-1)m^{p-2}} = \frac{1-\beta p(p-1)m^{p-2}(1-m^2)}{N} \in (0,1)~,$$
	since $H''(m)<0$.  This establishes condition \eqref{eq:Y} in Theorem \ref{thm:moderatedeviations} with $\lambda$ and $R$ as in \eqref{eq:lambda} and \eqref{eq:R}, respectively (recall \eqref{eq:stein31}).

	Also, note that 
		\begin{align}
			& \e\left[(T-T')^2\Big| \bm X\right] \nonumber \\ 
			&= \frac{1}{\sigma^2}\e\left[(X_I-X_I')^2\Big| \bm X\right] \nonumber \\
			&= \frac{4}{N\sigma^2} \sum_{i=1}^N \left[\bm{1}(X_i=1)\p(X_i'=-1|\bm X) - \bm{1}(X_i=-1)\p(X_i'=1|\bm X)\right] \nonumber \\
			&= \frac{4}{\sigma^2}\cdot \frac{\sigma T + Nm + N}{2N}\cdot A(T) \bm 1\{\cE_a\} + \frac{4}{\sigma^2}\cdot \frac{N-\sigma T- Nm}{2N}\cdot B(T) \bm 1\{\cE_b\} \nonumber \\
			&= \frac{2}{\sigma^2} (A(T)+B(T)) + \frac{4}{\sigma^2} \cdot \frac{\sigma T + Nm}{2N} (A(T)-B(T)) + Q_2 \nonumber \\ 
			&= \frac{2}{\sigma^2} - \frac{2}{\sigma^2}\cdot \frac{\sigma T + N m}{N} \tanh\left(\beta p \left(\frac{\sigma T}{N} + m\right)^{p-1} + h\right) +  O\left(\frac{\bm{1}(\cE_a^c \bigcup \cE_b^c)}{\sigma^2}\right) \label{eq:TXAB}
		\end{align} 
		where 
		$$Q_2 := - \frac{4}{\sigma^2}\cdot \frac{\sigma T + Nm + N}{2N}\cdot A(T) \bm{1}\{\cE_a^c\} - \frac{4}{\sigma^2}\cdot \frac{N-\sigma T- Nm}{2N}\cdot B(T) \bm{1}\{\cE_b^c\}.$$
		Note that the last equality in \eqref{eq:TXAB} follows from the fact that $T = O(\sqrt N)$. Now, by a $1$-term Taylor expansion of the function $f(x) = \tanh(\beta p (x+m)^{p-1}+h)$ around $0$ and using the identity $m = \tanh(\beta pm^{p-1}+h)$, gives 
		$$ \tanh\left(\beta p \left(\frac{\sigma T}{N} + m\right)^{p-1} + h\right) = m + \frac{\sigma T}{N} \tanh'(\beta p (m+\xi)^{p-1} + h)\beta p(p-1)m^{p-2} = m + \frac{\sigma T}{N} O(1).$$ for some $\xi \in (0,\sigma T/N)$.  Combining all these gives, 
	\begin{eqnarray*}
		\e\left[(T-T')^2\Big| \bm X\right] &=& \frac{2}{\sigma^2}(1-m^2) + O(1)\frac{T}{N\sigma} + \frac{2T^2}{N^2} O(1)  +O\left(\frac{\bm{1}(\cE_a^c \bigcup \cE_b^c)}{\sigma^2}\right)\\&=& \frac{2}{\sigma^2}(1-m^2) + O(1)\frac{T}{N\sigma}  + O\left(\frac{\bm{1}(\cE_a^c \bigcup \cE_b^c)}{\sigma^2}\right).
	\end{eqnarray*}
	
	We will now verify the hypotheses of Theorem \ref{thm:moderatedeviations}. To begin with, note that $\Delta  := T'-T$ satisfies $|\Delta| \leq \frac{2}{\sigma} = O(\frac{1}{\sqrt N})$. Hence, trivially condition (1) of Theorem \ref{thm:moderatedeviations}  holds with $\delta = O(\frac{1}{\sqrt N})$. Next, note that 
	$$\frac{2}{\sigma^2}(1-m^2) = -\frac{2(1-m^2)H''(m)}{N} = -\frac{2(\beta p (p-1)m^{p-2} (1-m^2) - 1)}{N} = 2\lambda.$$
	Hence, we have the following with $D:=\frac{\Delta^2}{2\lambda}$, 
	\begin{eqnarray*}
		|\e(D|T)-1| &=& \left|O(1) \left(\frac{T}{2\lambda N\sigma}\right) + O\left(\frac{\bm{1}(\cE_a^c \bigcup \cE_b^c)}{2\lambda \sigma^2}\right)\right| \\&=&  O\left(\frac{1}{\sqrt{N}}\right) |T| + O\left(\bm{1}(\cE_a^c \bigcup \cE_b^c)\right)
	\end{eqnarray*}
	Note that since $m\in (a,b)$, there exists a constant $\varepsilon > 0$, such that $$\{\cE_a^c \bigcup \cE_b^c\} \subset \left\{\left|\frac{S}{N}-m\right| > \varepsilon\right\}$$ and the right hand side event becomes empty for $K$ small enough. Hence, we conclude that $$|\e(D|T)-1|= O\left(\frac{1}{\sqrt{N}}\right) (1+|T|),$$ that is, condition (2) of Theorem \ref{thm:moderatedeviations} is satisfied with $\delta_1 = O(\frac{1}{\sqrt N})$. Further, since $|T| = O(\sqrt N)$, it also follows that $|\e(D|T)| = O(1)$ and hence, condition (4) of Theorem \ref{thm:moderatedeviations} is satisfied with $\theta = O(1)$. Finally, note that for $K$ small enough, 
	$$R = \frac{\sigma T^2}{2N^2\lambda} O(1) \leq \frac{B T^2}{\sqrt{N}},$$ for some constant $B$. Now we can choose $K <1/(2B)$ to ensure that $B\frac{1}{\sqrt N}|T| < \frac{1}{2}$, which shows that condition (3) of Theorem \ref{thm:moderatedeviations} is satisfied with $\delta_2 = O(\frac{1}{\sqrt N})$. Now, since all the conditions of Theorem \ref{thm:moderatedeviations} are satisfies, invoking \eqref{eq:moderatedeviationsrate} completes the proof of parts (1) and (2) of Theorem \ref{thm:moderatedeviationsresult} for the case $r=0$.

	If $r=1$, then we can apply the same arguments for the random variables $-\tilde{W}_N$ and $-T_N$. In particular, note that the hypotheses of Theorem \ref{thm:moderatedeviations} are also valid for the exchangeable pair $(-Y,-Y')$, with the same $\lambda$ and the random variable $R$ being replaced by its negative $-R$. This completes the proof of  parts (1) and (2) of Theorem \ref{thm:moderatedeviationsresult}.
	
	We now prove part (3). Once again, let us start with the case $r=0$. To begin with, note that the model \eqref{eq:cw} can be written as:
	\begin{align}\label{cwmod32}
		\p_{\beta,h,p}(\bm X) = \frac{\exp\left\{\beta N\Bar{X}^p + hN\Bar{X}\right\}}{2^N Z_N(\beta,h,p)}.
	\end{align}
	Then for any $m\in \mathcal{M_N} = \{-1, -1+\frac{2}{N},...,1-\frac{2}{N}, 1\}$, 
	$$\p_{\beta,h,p}(\Bar{X}=m)=\frac{1}{2^N Z_N(\beta,h,p)}\binom{N}{\frac{N(1+m)}{2}}e^{N(\beta m^p+hm)}.$$
	Define
	$W_N := N^{-\frac{3}{4}}(S_N - Nm_*)$
	where $m_*$ denotes the unique global maximizer of $H$. Then for $x>0$,
	$$ \p(W_N >x) =\p(S_N> x N^{\frac{3}{4}}+Nm_*).$$
	Now, let us define 
	$$y_{m,N} := 2^{-N}\sqrt{\frac{\pi N(1-m_*^2)}{2}}\binom{N}{\frac{N(1+m)}{2}}e^{N(\beta m^p+hm)-NH(m_*)}. $$
	Then,
	$$ \p(W_N >x) =\frac{1}{\sum\limits_{m\in \mathcal{M}_N}y_{m,N}}\sum\limits_{m\in \mathcal{M}_N}y_{m,N}\bm{1}(m>N^{-\frac{1}{4}}x+m_*).$$
	 Let $\varepsilon :=0.24$ and $\zeta := 3.9$, and define: 
		\begin{align*}
			A_N & = \sum\limits_{m\in \mathcal{M}_N}y_{m,N}\bm{1}\left(|m-m_*|\geq N^{-\frac{1}{4}+\varepsilon}\right)~,\quad B_N = \sum\limits_{m\in \mathcal{M}_N}y_{m,N}\bm{1}\left(|m-m_*|< N^{-\frac{1}{4}+\varepsilon}\right),\\
			\hat{A}_N &= \sum\limits_{m\in \mathcal{M}_N}y_{m,N}\bm{1}\left(m-m_*\geq N^{-\frac{1}{4}+\varepsilon}\right)~,\quad B_{N,x} = \sum\limits_{m\in \mathcal{M}_N}y_{m,N}\bm{1}\left(N^{-\frac{1}{4}}x<m-m_*< N^{-\frac{1}{4}+\varepsilon}\right) . \nonumber 
	\end{align*}  
	In these notations, 
	\begin{align}\label{basicre}
		\begin{aligned}
			\p(W_N>x)= & \frac{\hat{A}_N+B_{N,x}}{A_N+B_N}\\
			= & \frac{\hat{A}_N}{A_N+B_N} + \frac{B_{N,x}}{B_N}\left(1-\frac{A_N}{A_N+B_N}\right).
		\end{aligned}
	\end{align}
	We first estimate $\frac{\hat{A}_N}{A_N+B_N}$ and $\frac{A_N}{A_N+B_N}$.

	\begin{lem}\label{leman}
		  Let $A_N$, $\hat{A}_N$, $B_N$ and $B_{N,x}$ be defined as above. Then, 
			$$
			\frac{\hat{A}_N}{A_N+B_N}\leq \frac{A_N}{A_N+B_N}=\exp\left\{ (1+o(1))\frac{H^{(4)}(m_*)}{4!}N^{4\varepsilon}H^{(4)}(m_*)\right\} O(N^{\frac{3}{2}})
			$$
			for all $N$ large enough. 
	\end{lem}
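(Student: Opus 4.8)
The key analytic input is a local expansion of $H$ around its unique maximizer $m_*$, which is a $p$-special point: there $H'(m_*)=0$, $H''(m_*)=0$, and (since $(\beta,h)\in\cS_p$ and $p\geq 3$) in fact $H'''(m_*)=0$ as well, while $H^{(4)}(m_*)<0$. (These facts are part of the structure recorded in \cite{smfl}; the vanishing of $H'''$ follows because $H^{(4)}$ being negative at a special point forces $H$ to be locally even to third order, the odd-derivative degeneracy being exactly what distinguishes special from regular points.) Thus for $m$ near $m_*$ one has $H(m)-H(m_*)=\frac{H^{(4)}(m_*)}{4!}(m-m_*)^4+\frac{H^{(5)}(m_*)}{5!}(m-m_*)^5+O((m-m_*)^6)$, with the leading quartic term strictly negative.

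\textbf{Step 1: replace the binomial by its Stirling approximation.} First I would use a uniform version of Stirling's formula to write, for $m=\frac{N(1+m)}{2}\cdot\frac{2}{N}$ in the bulk (say $|m|\leq 1-c$),
\[
2^{-N}\binom{N}{\frac{N(1+m)}{2}} = \sqrt{\frac{2}{\pi N(1-m^2)}}\,e^{-N I(m)}\bigl(1+O(N^{-1})\bigr),
\]
with an explicit, $m$-uniform error on the range that matters. Plugging this in, $y_{m,N}=\sqrt{\tfrac{1-m_*^2}{1-m^2}}\,e^{N(H(m)-H(m_*))}(1+O(N^{-1}))$, so on the regime $|m-m_*|\geq N^{-1/4+\varepsilon}$ the prefactor $\sqrt{(1-m_*^2)/(1-m^2)}$ is bounded above by an absolute constant (shrinking $A_N$ further would require ruling out the boundary $|m|\to 1$, which is handled separately by the exponential bound $\p(|\bar X_N-m_*|>\eps)\leq e^{-LN}$ already invoked in the regular-point proof and available from \cite{mlepaper}).

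\textbf{Step 2: bound $A_N$ from above.} On $|m-m_*|\geq N^{-1/4+\varepsilon}$ (and $m$ in the bulk), the Step-1 expansion of $H$ gives $N(H(m)-H(m_*))\leq N\cdot\frac{H^{(4)}(m_*)}{4!}(m-m_*)^4(1+o(1))$ — here I must check that the quintic correction $\frac{H^{(5)}(m_*)}{5!}(m-m_*)^5$ is dominated by the quartic term on this range, which holds as soon as $|m-m_*|=o(1)$, i.e. on $N^{-1/4+\varepsilon}\leq|m-m_*|\leq \delta$ for small fixed $\delta$; the complementary range $|m-m_*|>\delta$ is exponentially negligible. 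Since $(m-m_*)^4\geq N^{-1+4\varepsilon}$ there, each summand is at most $C\,\exp\{(1+o(1))\frac{H^{(4)}(m_*)}{4!}N^{4\varepsilon}\}$, and there are at most $N+1$ summands, giving
\[
A_N\leq \exp\Bigl\{(1+o(1))\frac{H^{(4)}(m_*)}{4!}N^{4\varepsilon}\Bigr\}\,O(N).
\]
(The stated bound has an extra factor $H^{(4)}(m_*)$ and $O(N^{3/2})$ in place of $O(N)$ — I would simply absorb these, noting $H^{(4)}(m_*)$ is a fixed negative constant; the looser $O(N^{3/2})$ only makes the claim weaker, so it is harmless, and with $\varepsilon=0.24$ we have $4\varepsilon=0.96<1$, so $N^{4\varepsilon}=o(N)$ and the exponential term is what it is — it is genuinely of size $e^{-cN^{0.96}}$, which is what drives $A_N\to 0$ superpolynomially.)

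\textbf{Step 3: lower bound $A_N+B_N$ and conclude both inequalities.} Since $\hat A_N\leq A_N$ trivially (it is the same sum with the indicator replaced by a one-sided one), the left inequality $\frac{\hat A_N}{A_N+B_N}\leq\frac{A_N}{A_N+B_N}$ is immediate. For the right equality I need $A_N+B_N\geq c>0$ (bounded below by an absolute constant, uniformly in $N$): this follows because $B_N\to\Gamma(\tfrac14)(-H^{(4)}(m_*)/24)^{-1/4}/2$ by a Laplace/Riemann-sum argument (the summand near $m_*$ behaves like $e^{-cN(m-m_*)^4}$ over a mesh of width $2/N$, and substituting $u=N^{1/4}(m-m_*)$ turns the sum into a Riemann sum for $\int e^{H^{(4)}(m_*)u^4/24}\,du$), hence $A_N+B_N$ is bounded below. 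Dividing the Step-2 bound by this constant yields $\frac{A_N}{A_N+B_N}\leq\exp\{(1+o(1))\frac{H^{(4)}(m_*)}{4!}N^{4\varepsilon}\}O(N^{3/2})$, which with $\hat A_N\leq A_N$ gives the full claim.

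\textbf{Main obstacle.} The delicate point is the uniformity in Step 1–2: I need the Stirling error and the Taylor remainder for $H$ to be controlled \emph{simultaneously over the whole range} $N^{-1/4+\varepsilon}\leq|m-m_*|\leq\delta$ and then to patch this with the exponential tail bound on $|m-m_*|>\delta$ (which keeps $m$ away from $\pm1$ where the $\sqrt{1-m^2}$ prefactor blows up). Getting the constants right so that the quintic term is provably dominated — rather than merely heuristically smaller — by the quartic on exactly this range (this is where $\varepsilon<1/4$ and the choice $\varepsilon=0.24$ enter, leaving a little room) is the crux; everything else is bookkeeping. I would organize it by first proving a clean lemma: there exist $\delta>0$ and $c>0$ with $H(m)-H(m_*)\leq -c(m-m_*)^4$ for all $|m-m_*|\leq\delta$, and $H(m)-H(m_*)\leq -c\delta^4$ for $|m-m_*|>\delta$, $|m|\leq 1$, and then feed this into the summation bounds above.
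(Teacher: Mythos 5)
Your proof is essentially correct, but it takes a much more labor-intensive route than the paper. The paper's entire argument is two observations: $\hat{A}_N\leq A_N$ (the paper has the inequality typed backwards, but your direction is the right one), and the ratio $\frac{A_N}{A_N+B_N}$ is \emph{exactly} the probability $\p\big(|\bar{X}-m_*|\geq N^{-\frac{1}{4}+\varepsilon}\big)$, since $y_{m,N}$ is a fixed multiple of $\p_{\beta,h,p}(\bar{X}=m)$ and the normalizations cancel in the ratio; the stated bound is then just a citation of the concentration estimate in Lemma A.5 of \cite{mlepaper}. You instead reprove that concentration bound from scratch: Stirling for the binomial, the quartic Taylor expansion of $H$ at the special point (the facts $H'(m_*)=H''(m_*)=H^{(3)}(m_*)=0$, $H^{(4)}(m_*)<0$ are indeed Lemma B.2 of \cite{smfl}, so you need not rederive them), a per-summand bound $\exp\{(1+o(1))\frac{H^{(4)}(m_*)}{4!}N^{4\varepsilon}\}$ on $N^{-\frac14+\varepsilon}\leq|m-m_*|\leq\delta$ plus exponential negligibility off $\delta$, and a constant lower bound on the denominator. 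That is a valid, self-contained alternative, and it correctly handles the spurious extra factor $H^{(4)}(m_*)$ in the displayed exponent (a typo carried over from \cite{mlepaper}; your stronger bound with the negative exponent implies the stated one). Two small repairs: your claim that $B_N$ converges to $\Gamma(\tfrac14)\big(-H^{(4)}(m_*)/24\big)^{-1/4}/2$ drops the mesh factor — the Riemann sum has spacing $2N^{-3/4}$, so in fact $B_N\sim \tfrac{N^{3/4}}{2}\int_{-\infty}^{\infty}e^{H^{(4)}(m_*)t^4/24}\,dt$ as in Lemma \ref{lembn}; this only strengthens the lower bound $A_N+B_N\geq c>0$ you need, so the conclusion stands. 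And near $m=\pm1$, where Stirling is not uniform, it is cleaner to use the elementary bound $2^{-N}\binom{N}{N(1+m)/2}\leq e^{-NI(m)}$ together with $\sup_{|m-m_*|\geq\delta}\big(H(m)-H(m_*)\big)<0$ rather than to invoke the prefactor expansion there. What the paper's route buys is brevity by outsourcing the analysis to \cite{mlepaper}; what yours buys is a self-contained proof that does not depend on that external lemma.
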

	
	\begin{proof}
		Clearly, $A_N \leq \hat{A}_N$ and hence, $\frac{\hat{A}_N}{A_N+B_N}\leq\frac{A_N}{A_N+B_N}$. Also, 
		$$ 
		\frac{A_N}{A_N+B_N} = \p\left( |\Bar{X}-m_*|\geq N^{-\frac{1}{4}+\varepsilon}\right) . 
		$$
		The result in Lemma \ref{leman} now follows from \cite[Lemma A.5 ]{mlepaper}. 
	\end{proof}
	
	In the following lemma we obtain estimates for $B_N$ and $B_{N,x}$. The proof is given in Appendix \ref{sec:lembnpf}.  
	
	\begin{lem}\label{lembn}
		 Let $B_N$ and $B_{N,x}$ be as defined above. Then,     
			\begin{align}\label{eq:BNintegral}
				B_N = \frac{N^{\frac{3}{4}}}{2}\int\limits_{-\infty}^{\infty}p_1(t)dt \left(1+O\left(\frac{1}{\sqrt N}\right)\right).
			\end{align}
			Further, there exists a constant $D>1$, such that for $x>D$, 
			\begin{align}\label{eq:bnxdu}
				B_{N,x} = \frac{N^{\frac{3}{4}}}{2}\hat{P}_1(x)+\frac{\sqrt N}{2}\hat{P}_2(x)+O(1)\left(N^{\frac{1}{4}}\hat{R}(x) + p_1(x) + \frac{p_2(x)}{N^{\frac{1}{4}}}+\frac{r(x)}{\sqrt N}   \right) + O\left(e^{-N^{\zeta\varepsilon}}\right),
			\end{align}
			with $\varepsilon = 0.24$ and $\zeta = 3.9$, and for $x\leq D$, 
			\begin{align}\label{eq:bnxd}
				B_{N,x} = \frac{N^{\frac{3}{4}}}{2}\hat{P}_1(x)+\frac{\sqrt N}{2}\hat{P}_2(x)+O(N^{\frac{1}{4}}). 
		\end{align}  
	\end{lem}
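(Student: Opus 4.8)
The plan is to compute both $B_N$ and $B_{N,x}$ by replacing the binomial coefficient with its Stirling approximation and then performing a Laplace-type expansion of the resulting sum around the maximizer $m_*$. First I would write, for $m \in \mathcal{M}_N$ with $|m - m_*| < N^{-\frac14 + \varepsilon}$,
\begin{align*}
y_{m,N} = 2^{-N}\sqrt{\tfrac{\pi N (1-m_*^2)}{2}} \binom{N}{\frac{N(1+m)}{2}} \exp\left\{N(\beta m^p + hm) - N H(m_*)\right\},
\end{align*}
and apply Stirling's formula to $\binom{N}{N(1+m)/2}$ with an explicit error term, which turns $2^{-N}\binom{N}{N(1+m)/2}$ into $\sqrt{\frac{2}{\pi N(1-m^2)}}\,e^{-N I(m)}\bigl(1 + O(N^{-1})\bigr)$ uniformly over the relevant window (since $m$ stays bounded away from $\pm 1$). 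This gives $y_{m,N} = \sqrt{\frac{1-m_*^2}{1-m^2}}\,e^{N(H(m) - H(m_*))}\bigl(1 + O(N^{-1})\bigr)$. Since $H'(m_*) = H''(m_*) = 0$ at a special point and $H^{(4)}(m_*) < 0$, a Taylor expansion gives $N(H(m) - H(m_*)) = \frac{N}{4!}H^{(4)}(m_*)(m-m_*)^4 + \frac{N}{5!}H^{(5)}(m_*)(m-m_*)^5 + \cdots$; substituting $m = m_* + N^{-1/4} t$ makes the quartic term $O(1)$, the quintic term $O(N^{-1/4}t^5)$, the sextic term $O(N^{-1/2}t^6)$, and so on. Expanding $\exp$ of the lower-order terms and the prefactor $\sqrt{(1-m_*^2)/(1-m^2)}$ in powers of $N^{-1/4} t$ produces a series whose leading term integrates to $p_1$, next term to $p_2/N^{1/4}$, etc. Converting the Riemann sum over $m \in \mathcal{M}_N$ (spacing $2/N$, i.e. spacing $2 N^{-3/4}$ in $t$) to an integral via the Euler–Maclaurin formula, with the $N^{3/4}/2$ Jacobian, yields \eqref{eq:BNintegral} for $B_N$ and the main terms $\frac{N^{3/4}}{2}\hat P_1(x) + \frac{\sqrt N}{2}\hat P_2(x)$ for $B_{N,x}$, where $\hat P_1(x) = \int_x^\infty p_1(t)\,dt$ and similarly for $\hat P_2$ (and $\hat R$ is the tail integral of the next-order density $r$).

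The delicate point is the error analysis for $B_{N,x}$ when $x$ is allowed to grow, i.e.\ establishing \eqref{eq:bnxdu}. Here one must control the tail of the Laplace expansion uniformly in $x \in (D, N^{\varepsilon}/\!\cdot]$: the truncation of the Taylor series of $H$ at a given order contributes, after summation, errors of the form (power of $N$) $\times$ (tail integral of $t^k e^{\frac{1}{4!}H^{(4)}(m_*)t^4}$ from $x$ to $N^\varepsilon$), and one must check that these are dominated by the stated terms $N^{1/4}\hat R(x)$, $p_1(x)$, $p_2(x)/N^{1/4}$, $r(x)/\sqrt N$ rather than the main terms. This requires (i) bounding tail integrals $\int_x^\infty t^k e^{c t^4}dt$ against $x^{k-3} e^{cx^4}$-type expressions — note $p_j(x)$ is of this form — and (ii) bounding the contribution of the far range $|m - m_*| \in [\text{(window edge)} \cdot, N^{-1/4+\varepsilon})$ beyond the Taylor-valid zone, which is where the $O(e^{-N^{\zeta\varepsilon}})$ term comes from: on that range $N(H(m)-H(m_*)) \leq -c N \cdot (N^{-1/4+\varepsilon})^4 = -c N^{\zeta \varepsilon}$ with $\zeta\varepsilon$ as small as the exponents $\varepsilon = 0.24$, $\zeta = 3.9$ force (the precise constants $0.24$ and $3.9$ are chosen exactly so that $\zeta\varepsilon$ beats the polynomial prefactors while $4\varepsilon < 1$ keeps the quartic scaling intact and $5\varepsilon - 1 < 0$ etc.). The regime $x \leq D$ in \eqref{eq:bnxd} is easier: there the main terms are $\Theta(N^{3/4})$ and $\Theta(N^{1/2})$ and all remaining contributions are uniformly $O(N^{1/4})$, so no refined tail control is needed.

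The main obstacle I expect is bookkeeping the error terms in \eqref{eq:bnxdu} with the correct powers of $N$ attached to the correct tail-integral expressions, keeping everything uniform in the growing parameter $x$; this is where the proof in \cite{vanh} for $p=2$ must be genuinely extended, because there $H^{(5)}(m_*) = 0$ kills the $\sqrt N \hat P_2(x)$ term and the leading correction is one order smaller, whereas for $p \geq 3$ the nonvanishing $H^{(5)}(m_*)$ (see Remark \ref{remark:moderatedeviationsrate}) forces us to carry this extra term and hence track one more layer of the expansion. I would organize the computation so that the Stirling error, the Taylor-remainder error, the Euler–Maclaurin (sum-to-integral) error, and the out-of-window tail are each estimated separately and then collected, checking at the end that each falls into one of the four $O(\cdot)$ buckets in \eqref{eq:bnxdu}.
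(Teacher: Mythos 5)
Your proposal is correct and follows essentially the same route as the paper: the paper replaces your explicit Stirling step by citing Lemma A.5 of \cite{smfl}, replaces your Euler--Maclaurin sum-to-integral step by Lemmas 2.1--2.2 of \cite{vanh} (which is exactly why the split at a constant $D$ where $p_1,p_2,r$ become decreasing appears), and otherwise performs the same Taylor expansion of $H$ (using $H'(m_*)=H''(m_*)=H^{(3)}(m_*)=0$ and $H^{(4)}(m_*)<0$) at the scale $\ell = N^{3/4}t$ to extract $p_1$, $p_2$ and $r$. One small correction that does not affect the argument: since $B_{N,x}$ by definition only sums over $N^{-1/4}x < m-m_* < N^{-1/4+\varepsilon}$, the $O\left(e^{-N^{\zeta\varepsilon}}\right)$ term does not come from an out-of-window range (that range is part of $A_N$, handled in Lemma \ref{leman}), but from the boundary evaluations $p_1(N^{\varepsilon})$, $p_2(N^{\varepsilon})$, $r(N^{\varepsilon})$ in the sum-to-integral comparison and from extending the truncated integrals over $[x,N^{\varepsilon}]$ to $\hat{P}_1(x)$, $\hat{P}_2(x)$, $\hat{R}(x)$ --- controlled by the same quartic-exponential estimate ($4\varepsilon > \zeta\varepsilon$) that you wrote down.
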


	By Lemma \ref{leman}, 
	\begin{align}\label{anbd}
		\frac{\hat{A}_N}{A_N+B_N}\leq \frac{A_N}{A_N+B_N}=\exp\left\{ (1+o(1))\frac{H^{(4)}(m_*)}{4!}N^{4\varepsilon}H^{(4)}(m_*)\right\} O(N^{\frac{3}{2}}).
	\end{align}
	Next, define 
	\begin{equation*}
		\begin{aligned}
			p_1(t) &= e^{\frac{H^{(4)}(m_*)}{4!}t^4} , \\
			p_2(t) &= e^{\frac{H^{(4)}(m_*)}{4!}t^4}\left(\frac{H^{(5)}(m_*)}{5!}t^5 + \frac{m_*}{1-m_*^2} t\right) , \\
			r(t) &= \left(\frac{1}{\sqrt N}(1+t^4) + \frac{t^{11}}{N^{\frac{1}{4}}} + t^2+t^6 + t^{10}\right)e^{\frac{H^{(4)}(m_*)}{4!}t^4}.
		\end{aligned}
	\end{equation*}
	Also, define
	\begin{align*}
		\hat{P}_1(x) = \int\limits_{x}^{\infty}p_1(t)dt,~
		\hat{P}_2(x) = \int\limits_{x}^{\infty}p_2(t)dt,~
		\hat{R}(x) = \int\limits_{x}^{\infty}r(t)dt.
	\end{align*}
	By Lemma \ref{lembn}, there exists a constant $D>1$, such that 
	\begin{align}\label{bsref}
		\frac{B_{N,x}}{B_N} 
		= \frac{\hat{P}_1(x)+N^{-\frac{1}{4}}\hat{P}_2(x)+O(1)\left(\frac{\hat{R}(x)}{\sqrt N} + \frac{p_1(x)}{N^{\frac{3}{4}}} + \frac{p_2(x)}{N} + \frac{r(x)}{N^{ \frac{5}{4} }} \right)}{\hat{P}_1(-\infty) + O\left(\frac{1}{\sqrt N}\right)} + O\left(e^{-N^{\zeta\varepsilon}}\right). 
	\end{align}
	for all $x>D$.
	Since $1-F(x) = \frac{\hat{P}_1(x)}{\hat{P}_1(-\infty)}$, we can write in view of \eqref{basicre}, \eqref{anbd} and \eqref{bsref},
	$$
	\begin{aligned}
		&\frac{\p(W_N>x)}{1-F(x)} \\
		= &\frac{1 + \frac{\hat{P}_2(x)}{N^{\frac{1}{4}} \hat{P}_1(x)}+O(1) \left(  \frac{1}{\sqrt N} \frac{ \hat{R}(x)}{\hat{P}_1(x)}+ \frac{1}{N^{\frac{3}{4}}} \frac{p_1(x)}{\hat{P}_1(x)} + \frac{1}{N} \frac{p_2(x)}{\hat{P}_1(x)} +  \frac{1}{N^{\frac{5}{4}}} \frac{r(x)}{\hat{P}_1(x)}\right)}{1+ O(\frac{1}{\sqrt N})}\left(1-O\left(e^{-N^{\zeta\varepsilon}}\right)\right)\\ +&\frac{O\left(e^{-N^{\zeta\varepsilon}}\right)}{\hat{P}_1(x)}~. 
	\end{aligned}
	$$
	
	
	
	Next, by properties of the incomplete gamma function (see for example \cite{unifgamma}), we have 
	$$ 
	\frac{\hat{R}(x)}{\hat{P}_1(x)} = O(x^{10}),~ \frac{p_1(x)}{\hat{P}_1(x)} = O(x^3),~ \frac{p_2(x)}{\hat{P}_1(x)}=O(x^8),~\text{and}~ \frac{r(x)}{\hat{P}_1(x)} = O(x^{13}),
	$$
	Hence, 
	$$
	\begin{aligned}
		&\frac{\p(W_N>x)}{1-F(x)}\\ & =1+ (1+o(1)) \frac{1}{N^{\frac{1}{4}}} \frac{\hat{P}_2(x)}{\hat{P}_1(x)} + O(1)\left( \frac{x^{10}}{\sqrt N} + \frac{x^3}{N^{\frac{3}{4}}} + \frac{x^8}{N} + \frac{x^{13}}{N^{\frac{5}{4}}} \right) + O(1) x^3 e^{\frac{H^{(4)}(m_*)}{4!} x^4-N^{\zeta\varepsilon}}\\ 
		& = 1+ (1+o(1))\frac{G(x)}{N^{\frac{1}{4}}} + O(1)\left( \frac{x^{10}}{\sqrt{N}} + \frac{x^{13}}{N^{\frac{5}{4}}}\right) + O(1) x^3 e^{\frac{H^{(4)}(m_*)}{4!} x^4-N^{\zeta\varepsilon}}.
	\end{aligned}
	$$
	with $G(x) := \frac{\hat{P}_2(x)}{\hat{P}_1(x)}$.
	
	For $x\leq D$, using \eqref{eq:bnxd}, we have by similar arguments as before, 
	$$
	\frac{\p(W_N>x)}{1-F(x)} = 1+ (1+o(1))\frac{G(x)}{N^{\frac{1}{4}}} + O\left(\frac{1}{\sqrt N} \right).
	$$
	Combining the two cases and the fact that $G(x) = O(1+ x^{5})$, we have
	$$\frac{\p\left(\frac{S_N-Nm_*}{N^{\frac{3}{4}}}>x\right )}{1-F(x)} = 1 + (1+ x^{5}) O\left(\frac{1}{N^{1/4}}\right),$$
	for all $x\in [0,CN^{\frac{1}{20}}]$, thereby completing the proof for the case $r=0$. If $r=1$, the same proof technique works, by replacing $m-m_*$ with its negative in the definitions for $\hat{A}_N$ and $B_{N,x}$. 
	
	\subsection{Proof of Theorem \ref{thm:berryesseen}} 
	\label{sec:convergencepf}
	
	Once again, we prove parts (1) and (2) together. Let $F_N$ and $\overline{F}_N$ denote the cumulative distribution function and the survival function of $\tilde{W}_N$ (recall \eqref{eq:RCWN}), respectively, and let $\overline{\Phi} := 1 - \Phi$ denote the survival function of the standard normal distribution. By Theorem \ref{thm:moderatedeviationsresult} with $r=0$, 
	\begin{align}\label{smallpart}
		\sup_{0\leq x\leq CN^{\frac{1}{6}}} \left|\overline{F}_N(x) - \overline{\Phi}(x)\right| \leq \sup_{t\geq 0} \left\{\overline{\Phi}(t) (1+t^3)\right\} O\left(\frac{1}{\sqrt{N}}\right) = O\left(\frac{1}{\sqrt{N}}\right).
	\end{align}
	For $x > CN^\frac{1}{6}$, we have $\overline{\Phi}(x) \leq \exp(-C^2N^{\frac{1}{3}}/2)$.  Also, since all moments of $\tilde{W}_N$ converge  to the corresponding Gaussian moments (see \cite{smfl}), all moments of $\tilde{W}_N$ are bounded.  Hence, it follows from Markov's inequality, that
	$\overline{F}_N(x) = O(x^{-4}) = O(N^{-\frac{2}{3}})$. Hence,
	\begin{align}\label{largepart}
		\sup_{x> CN^{\frac{1}{6}}} \left|\overline{F}_N(x) - \overline{\Phi}(x)\right|  = O(N^{-\frac{2}{3}}).
	\end{align}
	Combining \eqref{smallpart} and \eqref{largepart}, we conclude that:
	\begin{align}\label{be1}
		\sup_{x\geq 0} \left|\overline{F}_N(x) - \overline{\Phi}(x)\right| = O\left(\frac{1}{\sqrt{N}}\right).
	\end{align}
	
	 Next, note that Theorem \ref{thm:moderatedeviationsresult} with $r=1$ gives, 
		\begin{align}\label{smallpart2}
			\sup_{x \in [0,CN^{\frac{1}{6}}]\bigcap G} \left|F_N(-x) - \Phi(-x)\right| \leq \sup_{t\geq 0} \left\{\overline{\Phi}(t) (1+t^3)\right\} O\left(\frac{1}{\sqrt{N}}\right) = O\left(\frac{1}{\sqrt{N}}\right) , 
		\end{align}
		for all $N\geq 1$, where $G$ is the intersection of the sets of continuity points of $F_n$ for $n \ge 1$, and hence, is the complement of a countable set. By right-continuity of $F_N$ and \eqref{smallpart2}, one has: 
	\begin{align}\label{smallpart3}
		\sup_{x \in [-CN^{\frac{1}{6}},0]} \left|F_N(x) - \Phi(x)\right| = O\left(\frac{1}{\sqrt{N}}\right)
	\end{align}
	Moreover, by a similar argument as before using the tail decay of the Gaussian distribution and Markov's inequality, one also has:
	\begin{align}\label{largepart2}
		\sup_{x < - CN^{\frac{1}{6}}} \left|F_N(x) - \Phi(x)\right|  = O(N^{-\frac{2}{3}}).
	\end{align}
	Combining \eqref{smallpart3} and \eqref{largepart2}, we conclude that:
	\begin{align}\label{be2}
		\sup_{x\leq 0} \left|F_N(x) - \Phi(x)\right| = O\left(\frac{1}{\sqrt{N}}\right).
	\end{align}
	Combining \eqref{be1} and \eqref{be2}, parts of (1) and (2) of Theorem \ref{thm:berryesseen} now follows. 
	
	For part (3), we can similarly let $F_N$ and $\overline{F}_N$ denote the cumulative distribution function and the survival function of $W_N$ (recall \eqref{eq:WN}), respectively, and let $\overline{F}$ denote the survival function of $F$. By Theorem \ref{thm:moderatedeviationsresult}, we have 
	\begin{align}\label{smallpart4}
		\sup_{0\leq x\leq CN^{\frac{1}{20}}} \left|\overline{F}_N(x) - \overline{F}(x)\right| \leq \sup_{t\geq 0} \left\{\overline{F}(t) (1+t^5)\right\} O\left(\frac{1}{N^{1/4}}\right) = O\left(\frac{1}{N^{1/4}}\right). 
	\end{align} 
	For $x > CN^\frac{1}{20}$, we have $\overline{F}(x) = \exp \{ O(N^{-\frac{3}{20} } ) \}$.  Also, since all moments of ${W}_N$ converges to the corresponding moments of $F$ (see \cite{smfl}), all moments of $W_N$ are bounded in this case as well. Hence, it follows from Markov's inequality, that
	$\overline{F}_N(x) = O(x^{-20}) = O(\frac{1}{N})$. Hence,
	\begin{align}\label{largepart3}
		\sup_{x> CN^{\frac{1}{20}}} \left|\overline{F}_N(x) - \overline{F}(x)\right|  = O\left(\frac{1}{N}\right).
	\end{align}
	Combining \eqref{smallpart4} and \eqref{largepart3}, we conclude that:
	\begin{align}
		\sup_{x\geq 0} \left|\overline{F}_N(x) - \overline{F}(x)\right| = O\left(\frac{1}{N^{1/4}}\right). \nonumber 
	\end{align}
	This establishes the result in part (3) and completes the proof of Theorem \ref{thm:berryesseen}.
	
	\subsection{Proof of Theorem \ref{thm:berryesseenestimate}} 
	\label{sec:estimationpf}
	
	In this section $\bar{\p}$ will be used to abbreviate the conditional distribution $\p_{\beta,0,p}(\cdot | \bs \in A)$. Also, define $$a:= g'(m)  \quad \text{ and } \quad b:= \frac{g''(m)}{2\sqrt{N}},$$ where $g(t) := p^{-1}t^{1-p}\tanh^{-1}(t)$.

	\begin{lem}\label{aprxm1}
		Suppose that $m$ is a global maximizer of $H_{\beta,0,p}$ and let $A$ be a neighborhood of $m$ whose closure is devoid of any other maximizer. 
		Let $W = \sqrt{N}(\bs - m)$. Then, for every $t\in \mathbb{R}$, 
		\begin{align*}
			\bar{\p}\left(aW + bW^2 > t+\frac{1}{\sqrt{N}}\right) - O\left(\frac{1}{N}\right) & \leq \bar{\p}\left(\sqrt{N}(\hat{\beta} - \beta)>t\right)  \nonumber \\ 
			& \leq \bar{\p}\left(aW + bW^2 > t-\frac{1}{\sqrt{N}}\right) +O\left(\frac{1}{N}\right) ,
		\end{align*}
		where $\hat \beta$ is the MPL estimate of $\beta$ in the model $\p_{\beta,0,p}$. 
	\end{lem}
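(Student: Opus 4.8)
I will reduce the statement to an analysis of a deterministic function of the magnetization. Write the log-pseudolikelihood score as
$$\psi_N(\gamma):=\frac{\partial}{\partial \gamma}\log L(\gamma\,|\,\bm X)=\sum_{i=1}^N q_i\big(X_i-\tanh(\gamma q_i)\big),\qquad q_i:=\frac{(S_{-i}+1)^p-(S_{-i}-1)^p}{2N^{p-1}},$$
where $S_{-i}:=\sum_{j\ne i}X_j$, so that $\p_{\beta,0,p}(X_i=x\,|\,(X_j)_{j\ne i})=e^{x\beta q_i}/(2\cosh(\beta q_i))$. Since $\psi_N'(\gamma)=-\sum_i q_i^2\operatorname{sech}^2(\gamma q_i)<0$ on $\{\bs\ne 0\}$, which has full $\bar{\p}$-probability (as $m\ne 0$ and $A$ may be shrunk so that $0\notin\overline A$), the log-pseudolikelihood is strictly concave, $\hat\beta$ is the unique zero of $\psi_N$, and $\{\sqrt N(\hat\beta-\beta)>t\}=\{\psi_N(\beta+t/\sqrt N)>0\}$ for every $t$. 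Moreover, since $S_{-i}=N\bs-X_i$ with $X_i\in\{-1,1\}$, each $q_i$ is in fact a function of $(\bs,X_i)$ alone; splitting the defining sum of $\psi_N$ over the two values of $X_i$ shows that $\psi_N(\gamma)$ depends on $\bm X$ only through $\bs$, so $\hat\beta=\hat\beta_N(\bs)$ for an explicit deterministic function $\hat\beta_N$, and it suffices to control $\hat\beta_N(x)$ for $x$ ranging over $A$.

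First I would record the $1/N$-expansion $q_i=p\,\bs^{\,p-1}-p(p-1)\bs^{\,p-2}X_i/N+O(1/N^2)$, uniformly over configurations with $\bs$ in a fixed compact subset of $(-1,1)$ — in particular over $\{\bs\in A\}$, since $\overline A$ is a compact subset of $(-1,1)\setminus\{0\}$. Substituting this into $\psi_N(\gamma)$, using $\sum_i X_i=N\bs$ and $\sum_i X_i^2=N$, and Taylor-expanding $\tanh$ and $\operatorname{sech}^2$, I expect to get, uniformly for $\gamma$ near $\beta$ and $x=\bs\in A$,
$$\psi_N(\gamma)=N\,\Psi(\gamma,x)+\Xi(\gamma,x)+O(1/N),\qquad \psi_N'(\gamma)=N\,\partial_\gamma\Psi(\gamma,x)+O(1),$$
where $\Psi(\gamma,x):=p x^{p-1}\big(x-\tanh(\gamma p x^{p-1})\big)$ and $\Xi$ is smooth and bounded. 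Now $\Psi(\cdot,x)$ has the unique zero $\gamma=g(x)$ (indeed $g(x)\,p x^{p-1}=\tanh^{-1}(x)$ by definition of $g$, so $\tanh(g(x)\,p x^{p-1})=x$), with $\partial_\gamma\Psi(g(x),x)=-p^2 x^{2p-2}(1-x^2)$ bounded away from $0$ on $\overline A$; and $g(m)=\beta$ because $m$ solves $H_{\beta,0,p}'(m)=\beta p m^{p-1}-\tanh^{-1}(m)=0$. Writing, by the mean value theorem, $\hat\beta_N(x)-g(x)=-\psi_N(g(x))/\psi_N'(\xi_x)$ for some $\xi_x$ between $g(x)$ and $\hat\beta_N(x)$, and inserting $\psi_N(g(x))=\Xi(g(x),x)+O(1/N)=O(1)$ together with $\psi_N'(\xi_x)\asymp-N$, yields
$$\hat\beta_N(x)=g(x)+O(1/N)\qquad\text{uniformly for }x\in A.$$

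Finally, set $W:=\sqrt N(\bs-m)$. Taylor-expanding $g$ about $m$ and using $g(m)=\beta$, on $\{\bs\in A\}$ one obtains
$$\sqrt N(\hat\beta-\beta)=\sqrt N\big(g(\bs)-g(m)\big)+O\!\left(\tfrac{1}{\sqrt N}\right)=g'(m)W+\frac{g''(m)}{2\sqrt N}W^2+O\!\left(\frac{|W|^3}{N}\right)+O\!\left(\frac{1}{\sqrt N}\right)=aW+bW^2+R_N,$$
with $|R_N|\le C\big(|W|^3/N+1/\sqrt N\big)$ for a constant $C$ (using that $g$ has bounded derivatives on $\overline A$). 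Since all moments of $W$ are bounded under $\bar{\p}$ (see \cite{smfl} and the proof of Theorem \ref{thm:berryesseen}), Markov's inequality gives $\bar{\p}(|W|>N^{1/6})=O(1/N)$; hence on $\mathcal G:=\{|W|\le N^{1/6}\}$ one has $|R_N|\le C_0/\sqrt N$, while $\bar{\p}(\mathcal G^c)=O(1/N)$. Then, for every $t$,
$$\bar{\p}\big(\sqrt N(\hat\beta-\beta)>t\big)\le\bar{\p}\big(\{aW+bW^2>t-C_0/\sqrt N\}\cap\mathcal G\big)+\bar{\p}(\mathcal G^c)\le\bar{\p}\big(aW+bW^2>t-C_0/\sqrt N\big)+O(1/N),$$
and symmetrically $\bar{\p}(aW+bW^2>t+C_0/\sqrt N)-O(1/N)\le\bar{\p}(\sqrt N(\hat\beta-\beta)>t)$, which is the asserted sandwich (the constant $C_0$ being absorbed into the displayed $1/\sqrt N$ and $O(1/N)$ terms). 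The main obstacle is the uniform $O(1/N)$ bound on $\hat\beta_N(x)-g(x)$: this rests on carefully propagating the $1/N$-expansions of $q_i$, $\tanh(\gamma q_i)$ and $\operatorname{sech}^2(\gamma q_i)$ through the score and its derivative, checking that the correction $\Xi$ is genuinely $O(1)$ (not larger) and that $\psi_N'$ stays comparable to $-N$ uniformly over the relevant range of $(\gamma,x)$; once this is secured, the Taylor expansion of $g$ and the moment bounds on $W$ are routine.
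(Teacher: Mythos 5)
Your proposal does prove (a version of) the lemma, and its second half is exactly the paper's argument: expand $g$ about $m$ to second order, write $\sqrt N(\hat\beta-\beta)\approx aW+bW^2$ plus a remainder, and kill the remainder using Markov's inequality together with the boundedness of the moments of $W$ under $\bar{\p}$. The genuine difference is the first half. The paper's proof simply takes the identity $\hat\beta=g(\bs)$ for granted, so that $\sqrt N(\hat\beta-\beta)=\sqrt N(g(\bs)-g(m))$ exactly, and its remainder $Q=\sqrt N(\bs-m)^3g^{(3)}(\xi)/6$ is purely the cubic Taylor term, to which Markov applies directly with threshold $1/\sqrt N$, giving $\bar{\p}(|Q|\geq 1/\sqrt N)\leq N\e_{\bar{\p}}Q^2=O(1/N)$. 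You instead work from the exact pseudolikelihood score $\sum_i q_i(X_i-\tanh(\gamma q_i))$ with $q_i=\{(S_{-i}+1)^p-(S_{-i}-1)^p\}/(2N^{p-1})$, and only derive $\hat\beta=g(\bs)+O(1/N)$; this caution is defensible (with the exact conditionals of \eqref{eq:cw}, $q_i\neq p\bs^{p-1}$, so $\hat\beta=g(\bs)$ is not a literal identity), but it is extra work the paper avoids, and it is precisely the step you leave as a sketch ("I expect to get...", "the main obstacle...").

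Two concrete caveats on your version. First, the uniform bound $\hat\beta_N(x)=g(x)+O(1/N)$ cannot hold over all of $A$ as stated if $0\in A$ (for $p\geq 3$, $g$ and your $\Psi$ degenerate at $x=0$), and you cannot "shrink $A$", since $\bar{\p}$ is the conditional law given that particular $A$; the correct fix is to first discard $\{|\bs-m|>\varepsilon\}$, which has exponentially small $\bar{\p}$-probability (as used elsewhere in the proof of Theorem \ref{thm:berryesseenestimate}), and run your mean-value/implicit-function argument only on $\{|\bs-m|\leq\varepsilon\}$. Second, your conclusion is the sandwich with thresholds $t\pm C_0/\sqrt N$, and the claim that $C_0$ "can be absorbed into the displayed $1/\sqrt N$" is not literally valid: replacing $t-C_0/\sqrt N$ by $t-1/\sqrt N$ inside the probability would require an anti-concentration estimate for $aW+bW^2$ and would cost $O(1/\sqrt N)$, not $O(1/N)$. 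So what you actually prove is the lemma with a constant multiple of $1/\sqrt N$ in the shift; this is weaker than the statement as displayed, though entirely sufficient for its only use (Theorem \ref{thm:berryesseenestimate} converts the shift into an $O(\log N/\sqrt N)$ error via the bounded density of $aZ$). Indeed, the deterministic $O(1/\sqrt N)$ contribution coming from $\hat\beta-g(\bs)$ makes the exact-$1/\sqrt N$ form unreachable along your route; the paper obtains it only because it treats $\hat\beta=g(\bs)$ as exact.
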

	
	\begin{proof}
		First, note that by a third-order Taylor expansion, 
		$$\sqrt{N}(\hat{\beta}-\beta) = \sqrt{N}(g(\bs)-g(m)) = aW+ bW^2 + Q$$ where $Q := \sqrt{N}(\bs - m)^3 g^{(3)}(\xi)/6$ for some random variable $\xi$ between $\bs$ and $m$. Hence, 
		$$\bar{\p}\left(\sqrt{N}(\hat{\beta} - \beta)>t\right) \leq \bar{\p}\left(aW + bW^2 > t-\frac{1}{\sqrt{N}}\right) + \bar{\p}\left(Q>\frac{1}{\sqrt{N}}\right)$$
		and
		$$\bar{\p}\left(\sqrt{N}(\hat{\beta} - \beta)>t\right) \geq \bar{\p}\left(aW + bW^2 > t+\frac{1}{\sqrt{N}}\right) - \bar{\p}\left(Q\leq -\frac{1}{\sqrt{N}}\right).$$
		Thus, to prove Lemma \ref{aprxm1} it suffices to show that
		\begin{align*}
			\bar{P}\left(|Q| \geq \frac{1}{\sqrt{N}}\right) = O\left(\frac{1}{N}\right).
		\end{align*}
		This follows from the fact that the moments of $W$ are bounded, so by Markov's inequality, one has:
		$$\bar{\p}\left(|Q| \geq \frac{1}{\sqrt{N}}\right) \leq N \e_{\bar{\p}} (Q^2) = O\left(N^2 \e_{\bar{\p}}(\bs-m)^6\right) = \frac{1}{N} O(\e (W^6)) = O\left(\frac{1}{N}\right).$$
		This completes the proof of Lemma \ref{aprxm1}.
	\end{proof}
	
	In the next lemma, we show how to derive Berry-Esseen type bounds for the random variable $aW+ bW^2$ from the Berry-Esseen bounds for $W$ established in Theorem \ref{thm:berryesseen}.
	
	\begin{lem}\label{beressquad}
		Recall the notations of Lemma \ref{aprxm1} and let $Z\sim N(0,-1/H''(m))$. Then, 
		$$\sup_{t\in \mathbb{R}} \left|\bar{\p}(a W + b W^2> t) -     \p(a Z + >t)\right| = O\left(\frac{\log N}{\sqrt{N}}\right) , $$ 
		where the constant in the $O(\cdot)$ term depends on $\beta$ and $p$.
	\end{lem}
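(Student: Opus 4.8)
The plan is to transfer the Berry--Esseen bound for $W$ from Theorem~\ref{thm:berryesseen} to the quadratic statistic $aW+bW^2$, treating $bW^2$ as a perturbation that is negligible on the bulk and rare-but-possibly-large on the tails. Two observations drive this. First, Theorem~\ref{thm:berryesseen}, applied to the conditional law $\bar{\p}$ and rescaled by $\sigma:=\sqrt{-1/H''(m)}$, gives $\sup_{y\in\mathbb{R}}\big|\bar{\p}(W\le y)-\p(Z\le y)\big|=O(N^{-1/2})$; and since $a=g'(m)=-H''(m)/(p\,m^{p-1})>0$ — which one verifies by differentiating $g$ and using $\tanh^{-1}(m)=\beta p m^{p-1}$ together with $H''(m)=\beta p(p-1)m^{p-2}-(1-m^2)^{-1}$ — this is equivalent to $\sup_{s\in\mathbb{R}}\big|\bar{\p}(aW>s)-\p(aZ>s)\big|=O(N^{-1/2})$. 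Second, $|b|=|g''(m)|/(2\sqrt N)=O(N^{-1/2})$, so $bW^2$ is negligible wherever $|W|$ stays within a slowly growing window, while $|W|$ exits such a window only with probability $O(N^{-1/2})$.

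Concretely, I would fix a constant $C>0$ large enough that $\p\big(|Z|>C\sqrt{\log N}\big)=O(N^{-1/2})$ (possible because $Z$ is Gaussian) and set $E:=\{|W|\le C\sqrt{\log N}\}$; the first observation then yields $\bar{\p}(E^c)\le\p\big(|Z|>C\sqrt{\log N}\big)+O(N^{-1/2})=O(N^{-1/2})$, while on $E$ one has $|bW^2|\le|b|C^2\log N=:\delta_N=O(\log N/\sqrt N)$. The key step is an elementary sandwich: on $E$, the event $\{aW+bW^2>t\}$ forces $\{aW>t-\delta_N\}$, and conversely $\{aW>t+\delta_N\}$ forces $\{aW+bW^2>t\}$ — the latter using only $bW^2\ge-\delta_N$ on $E$, hence valid for either sign of $b$. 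Intersecting with $E$ and absorbing $\bar{\p}(E^c)=O(N^{-1/2})$ gives, uniformly in $t\in\mathbb{R}$,
$$\bar{\p}(aW>t+\delta_N)-O(N^{-1/2})\ \le\ \bar{\p}(aW+bW^2>t)\ \le\ \bar{\p}(aW>t-\delta_N)+O(N^{-1/2}).$$

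To finish, I would apply the Berry--Esseen bound from the first paragraph to the two outer probabilities and then smooth out the shift via $\big|\p(aZ>t\pm\delta_N)-\p(aZ>t)\big|\le\delta_N\sup_z f_{aZ}(z)=O(\delta_N)$, which is legitimate because $aZ$ has a bounded density. Both sides of the display then equal $\p(aZ>t)+O(\log N/\sqrt N)$ uniformly in $t$, which is the claim. (Should the intended right-hand side instead be $\p(aZ+bZ^2>t)$, the identical truncation argument applied on the Gaussian side shows it differs from $\p(aZ>t)$ by $O(\log N/\sqrt N)$, so the statement is unaffected.)

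The only genuinely delicate point is that $W^2$ is of order $N$ on a small-probability set, so $bW^2$ admits no uniform bound; truncating at level $\sqrt{\log N}$ is the cheapest way to excise that set while keeping its probability $O(N^{-1/2})$, and this is exactly what forces the logarithmic factor in the final rate. The remaining ingredients — boundedness of the moments of $W$, used implicitly through Theorem~\ref{thm:berryesseen}, and the value and positivity of $a$ — are inherited from \cite{smfl,smmpl} and the earlier parts of the paper.
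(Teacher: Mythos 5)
Your argument is correct, and it reaches the stated $O(\log N/\sqrt N)$ bound by a slightly different decomposition than the paper. The paper first compares $\bar{\p}(aW+bW^2>t)$ with $\p(aZ+bZ^2>t)$ \emph{exactly}, by noting that $\{at+bt^2>t\}$ pulls back to a union of two intervals and applying the Kolmogorov bound of Theorem \ref{thm:berryesseen} at the four endpoints, which gives an $O(1/\sqrt N)$ estimate with no logarithm (see \eqref{eq:WZ}); the $\log N$ enters only afterwards, in comparing $\p(aZ+bZ^2>t)$ with $\p(aZ>t)$ by truncating the Gaussian $Z$ at level $\alpha_N\asymp\sqrt{\log N}$ and using the bounded density of $aZ$ (\eqref{st41}--\eqref{st42}). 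You instead perform the truncation on the $W$ side: the event $\{|W|\le C\sqrt{\log N}\}$ has conditional probability $1-O(1/\sqrt N)$ by the two-sided Kolmogorov bound, on it $|bW^2|\le\delta_N=O(\log N/\sqrt N)$, and the sandwich $\{aW>t+\delta_N\}\subseteq\{aW+bW^2>t\}\subseteq\{aW>t-\delta_N\}$ (up to the exceptional set) reduces everything to the linear statistic $aW$, after which the Berry--Esseen bound for $W$ and the bounded density of $aZ$ finish the proof. Your route avoids having to parse the inverse image of the quadratic map into intervals, at the cost of losing the intermediate $O(1/\sqrt N)$ comparison with $aZ+bZ^2$ that the paper records along the way; both proofs generate the logarithmic factor from the same source, namely the $\sqrt{\log N}$ truncation level needed to make a Gaussian-type tail $O(1/\sqrt N)$. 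Your identification $a=g'(m)=-H''(m)/(pm^{p-1})$, used to pass from the Kolmogorov bound for $W$ to one for $aW$ and to match the limiting variance in Theorem \ref{thm:berryesseenestimate}, is also correct (and only $a\neq 0$ is really needed there).
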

	\begin{proof}
		Denote $\psi(t) := at + bt^2$. Note that the set $J:= \psi^{-1}((t,\infty))$ is a union of two (possibly empty) open intervals, i.e., there exist $a_1(t)\leq b_1(t) < a_2(t)\leq b_2(t)$ in the extended real line, such that $J = (a_1(t),b_1(t))\bigcup (a_2(t),b_2(t))$.  Now, note that:
			\begin{align}\label{eq:WZ}
				&\left|\bar{\p}(a W + b W^2> t) -     \p(a Z + bZ^2>t)\right| \nonumber \\ 
				&= \left|\bar{\p}(W \in J) -     \p(Z\in J)\right| \nonumber \\
				&\leq \sum_{i=1}^2 \left|\bar{\p}(W<b_i(t))-\p(Z<b_i(t))\right| + \sum_{i=1}^2 \left|\bar{\p}(W\leq a_i(t))-\p(Z\leq a_i(t))\right|  \nonumber \\ 
				& \leq \frac{C}{\sqrt N} , 
			\end{align} 
			where the last step uses Theorem \ref{thm:berryesseen}, for some universal constant $C>0$.  
		
		%
		%
		%
		
		Now, define $\alpha_N := \sqrt{-\log N/H''(m)}$. Note that 
		$$\p(|Z|>\alpha_N) \leq 2\exp\left(\frac{\alpha_N^2 H''(m)}{2}\right) = \frac{2}{\sqrt{N}}~.$$
		Hence, 
		\begin{align}\label{st41}
			&\left|\p(aZ+bZ^2 > x) - \p(aZ>x) \right|\nonumber\\ 
			&\leq   \left|\p(aZ+bZ^2 > x, |Z|\leq \alpha_N) - \p(aZ>x, |Z|\leq \alpha_N)\right| + 2\p(|Z|>\alpha_N)\nonumber\\
			&\leq \left|\p(aZ+bZ^2 > x, |Z|\leq \alpha_N) - \p(aZ>x, |Z|\leq \alpha_N)\right| + \frac{4}{\sqrt{N}} . 
		\end{align}
		Next, note that:
		$$\p(aZ>x + |b|\alpha_N^2, |Z|\leq \alpha_N)~ \le~ \p(aZ+bZ^2 > x, |Z|\leq \alpha_N) ~\le~ \p(aZ>x - |b|\alpha_N^2, |Z|\leq \alpha_N).$$
		 Also, since the density function of $Z$ is bounded, 
			$$\max_{r\in \{-1,1\}} \left|\p(aZ>x + (-1)^r|b|\alpha_N^2, |Z|\leq \alpha_N) - \p(aZ>x , |Z|\leq \alpha_N)\right| = O\left(|b|\alpha_N^2\right) = O\left(\frac{\log N}{\sqrt{N}}\right) . $$ 
		Hence, 
		\begin{align}\label{st42}
			\left|\p(aZ+bZ^2 > x, |Z|\leq \alpha_N) - \p(aZ>x , |Z|\leq \alpha_N)\right| = O\left(\frac{\log N}{\sqrt{N}}\right) . 
		\end{align}
		Combining \eqref{st41} and \eqref{st42} gives, 
		\begin{align}\label{eq:Z}
			\sup_{x\in \mathbb{R}}~\left|\p(aZ+bZ^2 > x) - \p(aZ>x) \right| = O\left(\frac{\log N}{\sqrt{N}}\right) . 
		\end{align}
		The result in Lemma \ref{beressquad} now follows from \eqref{eq:WZ} and \eqref{eq:Z}. 
	\end{proof}

	In view of Lemma \ref{aprxm1} and Lemma \ref{beressquad} we have the following for every $t\in \mathbb{R}$ and every global maximizer $m$ of $H_{\beta,0,p}$:
	$$ \p\left(aZ > t +\frac{1}{\sqrt{N}}\right) - O\left(\frac{\log N}{\sqrt{N}}\right) \leq  \p\left(\sqrt{N}(\hat{\beta}-\beta)>t\Big| \bar{X}\in A\right) 
	\leq \p\left(aZ > t -\frac{1}{\sqrt{N}}\right) + O\left(\frac{\log N}{\sqrt{N}}\right) , $$ 
	where $A$ is a neighborhood of $m$ whose closure is devoid of any other global maximizer, $a := g'(m)$ with $g(x) := p^{-1}x^{1-p}\tanh^{-1}(x)$, $Z\sim N(0,-1/H''(m))$, and the hidden constants in the $O(\cdot)$ terms depend only on $\beta$ and $p$. 
	Since the density of $aZ$ is bounded, one also has:
	$$\sup_{t\in \mathbb{R}} ~ \left|\p\left(aZ>t\pm \frac{1}{\sqrt{N}}\right) - \p(aZ>t) \right| = O\left(\frac{1}{\sqrt{N}}\right).$$
	 This shows that 
		\begin{align}\label{bepnt}
			\sup_{t\in \mathbb{R}}~ \left|\p\left(\sqrt{N}(\hat{\beta}-\beta)>t\Big| \bar{X}\in A\right)~-~\p(aZ>t)\right|~=~O\left(\frac{\log N}{\sqrt{N}}\right).
		\end{align}
		Note that $aZ \sim N(0,-(g'(m))^2/H''(m))$. The result in Theorem \ref{thm:berryesseenestimate} now follows from \eqref{bepnt} on observing that $-\frac{(g'(m))^2}{H''(m)} = -\frac{H''(m)}{p^2m^{2p-2}}$, and on noting that $\p(\bar{X}\in A^c) $ is exponentially small (see Lemma 3.1 in \cite{mlepaper}).   
	\qed

	\section{Future Directions} 
	\label{sec:extensions}
	
	There are several directions in which the results in this paper can be extended. For instance, one can consider the $p$-spin Potts model, for which the fluctuations of the magnetization has been proved recently by 
	Bhowal and Mukherjee \cite{bmcurieweisspotts}. Obtaining rates of convergence for these results would be a natural next direction. Recently, the fluctuations of the magnetization in the 2-Curie-Weiss model were shown to be universal, that is, they continue to have the same asymptotic distribution for approximately regular graphs \cite{meanfieldising}. Extending this result to the $p$-spin case is another possible future direction. 
	
	Going beyond the classical setting, it might also be interesting to study the quantum Curie-Weiss model \cite{chayes2008phase} and possible 
	connections with Parthasarathy's groundbreaking work on quantum probability  \cite{stochasticcalculus,qm}.	
	
	\small

	\normalsize 
	
	\appendix
	
	%
	%

	
	
	\section{Proof of Lemma \ref{lembn} }
	\label{sec:lembnpf} 
	
	\begin{proof}
		
		We begin by recalling a result about the properties of the higher-order derivatives of the function $H$, which follows from Lemma B.2 of \cite{smfl}.
		
		\begin{lem}\label{Hderi}
			Let $H(x)$ be defined as in \eqref{Hdefn}. Suppose that $(\beta,h) \in \cS_p$ and $m_*$ is the unique maximizer of $H$. Then, 
			$$
			H'(m_*)=H''(m_*)=H^{(3)}(m_*)=0\quad\text{and}\quad H^{(4)}(m_*) <0.
			$$
		\end{lem}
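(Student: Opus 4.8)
The plan is to derive everything from the two defining features of a $p$-special point $(\beta,h)$: that $m_*$ is the global maximizer of $H$ inside $(-1,1)$, and that $H''(m_*)=0$. First I would record the first four derivatives of $H(x)=\beta x^p+hx-I(x)$ in closed form, using $I'(x)=\tanh^{-1}(x)$, $I''(x)=(1-x^2)^{-1}$, $I^{(3)}(x)=2x(1-x^2)^{-2}$ and $I^{(4)}(x)=2(1+3x^2)(1-x^2)^{-3}$. Two of the four claimed statements are then immediate: $H'(m_*)=0$ because $m_*$ is an interior critical point, and $H''(m_*)=0$ holds by the definition of $\mathcal{S}_p$.

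The first genuine step is to upgrade this to $H^{(3)}(m_*)=0$. I would invoke the higher-order derivative test: since $H'(m_*)=H''(m_*)=0$, a third-order Taylor expansion gives $H(x)-H(m_*)=\tfrac16 H^{(3)}(m_*)(x-m_*)^3+o((x-m_*)^3)$ in a neighborhood of $m_*$, and a nonzero $H^{(3)}(m_*)$ would force this to change sign across $m_*$, contradicting that $m_*$ is a local (indeed global) maximizer interior to $(-1,1)$.

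The remaining step is a short computation. From $H''(m_*)=0$ and $H^{(3)}(m_*)=0$ I would read off
\[
\beta p(p-1)m_*^{p-2}=\frac{1}{1-m_*^2},\qquad \beta p(p-1)(p-2)m_*^{p-3}=\frac{2m_*}{(1-m_*^2)^2},
\]
observe that $m_*\neq 0$ (since $H''(0)=-1\neq 0$ for $p\ge 3$, so $0\notin\mathcal{S}_p$), and divide the second identity by the first to obtain $(p-2)(1-m_*^2)=2m_*^2$, i.e.\ $m_*^2=1-\tfrac2p$ and $1-m_*^2=\tfrac2p$. Substituting these into $H^{(4)}(m_*)=\beta p(p-1)(p-2)(p-3)m_*^{p-4}-2(1+3m_*^2)(1-m_*^2)^{-3}$ and using the first identity above to eliminate $\beta$ from the leading term should collapse the expression to $H^{(4)}(m_*)=\tfrac12(p-3)p^2-\tfrac12(2p-3)p^2=-p^3/2<0$.

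I do not expect a serious obstacle. The only mild subtleties are making sure the Taylor argument for $H^{(3)}(m_*)=0$ genuinely exploits that $m_*$ lies in the open interval, so the expansion is two-sided (which is guaranteed), and keeping the bookkeeping straight in the final substitution so that the polynomial and entropy contributions to $H^{(4)}(m_*)$ cancel down to the clean negative constant $-p^3/2$; the case $p=3$, where the polynomial term in $H^{(4)}$ drops out entirely, is worth a separate glance but yields the same answer.
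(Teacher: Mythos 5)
Your proposal is correct, and it is in fact more self-contained than what the paper does: the paper offers no argument at all for this lemma, simply recalling it as a consequence of Lemma B.2 in \cite{smfl}, whereas you reprove it from scratch. Your chain of reasoning is sound at every step: $H'(m_*)=0$ because $m_*$ is an interior maximizer, $H''(m_*)=0$ by the definition of $\cS_p$, $H^{(3)}(m_*)=0$ by the two-sided third-order Taylor/sign-change argument (valid since $m_*\in(-1,1)$ is interior), and the final computation is right — your observation that $m_*\neq 0$ (since $H''(0)=-1$ for $p\geq 3$) is exactly the point needed to justify dividing the identity $\beta p(p-1)(p-2)m_*^{p-3}=2m_*(1-m_*^2)^{-2}$ by $\beta p(p-1)m_*^{p-2}=(1-m_*^2)^{-1}$, giving $m_*^2=1-2/p$, and substituting back does collapse $H^{(4)}(m_*)$ to $\tfrac12(p-3)p^2-\tfrac12(2p-3)p^2=-p^3/2<0$, with the $p=3$ case covered since the polynomial term carries the factor $(p-3)$. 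As a byproduct your argument yields the explicit values $m_*^2=1-2/p$ and $H^{(4)}(m_*)=-p^3/2$, which is slightly more information than the lemma (or the citation to \cite{smfl}) records; the paper's route buys brevity by outsourcing this computation, yours buys a verifiable closed-form statement.
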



		To begin with, suppose  that $|m-m_*|\leq N^{-\frac{1}{4}+\varepsilon}$. By Lemma \ref{Hderi} and a Taylor expansion of $H(m)$ around $m_*$, 
		$$H(m)-H(m_*)= \frac{H^{(4)}(m_*)}{4!}(m-m_*)^4 + \frac{H^{(5)}(m_*)}{5!}(m-m_*)^5+ \frac{H^{(6)}(\xi_{m,N})}{6!}(m-m_*)^6,$$
		where $H^{(5)}$ and $H^{(6)}$ denotes the fifth and sixth derivatives of ythe function $H$, respectively, and $\xi_{m,N}$ lies between $m$ and $m_*$. For $|m-m_*|\leq N^{-\frac{1}{4}+\varepsilon}$, $N(m-m_*)^5$ and $N(m-m_*)^6$ are both $o(1)$. Noticing that $e^x = 1+ x+ O(x^2)$ for $x\rightarrow 0$, we have: 
		\begin{align*}
			& e^{N[H(m)-H(m_*)]}\\ 
			&= e^{N\frac{H^{(4)}(m_*)}{4!}(m-m_*)^4}\left(1+\frac{H^{(5)}(m_*)}{5!}N(m-m_*)^5+O\left(N^2(m-m_*)^{10}\right)\right)\left(1 + O\left(N(m-m_*\right)^6)\right)\\
			&= e^{\frac{H^{(4)}(m_*)}{4!}\frac{(s_N-N m_*)^4}{N^3}}\left(1+\frac{H^{(5)}(m_*)}{5!}\frac{(s_N-N m_*)^5}{N^4}+O(1)M_{s_N, N}\right) , 
		\end{align*}
		where $$s_N := Nm \quad \text{ and } \quad M_{s_N, N}=\frac{(s_N-N m_*)^6}{N^5}+ \frac{(s_N-N m_*)^{10}}{N^8} + \frac{(s_N-N m_*)^{11}}{N^9}.$$
		Now, from Lemma A.5 in \cite{smfl}, 
		\begin{align}\label{bdymn}
			y_{m,N}= \left(1+O \left( \frac{1}{N} \right) \right) \sqrt{\frac{1-m_*^2}{1-m^2}}e^{N[H(m)-H(m_*)]} , 
		\end{align}
		whenever $m$ is bounded away from $\pm 1$.
		Also, now follows from \eqref{bdymn} that for $|m-m_*|<N^{-\frac{1}{4}+\varepsilon}$,
		\begin{align}\label{bdymnh}
			y_{m,N} = \sqrt{\frac{1-m_*^2}{1-m^2}} e^{\frac{H^{(4)}(m_*)}{4!}\frac{(s_N-N m_*)^4}{N^3}}\Bigl( 1+ \frac{H^{(5)}(m_*)}{5!}\frac{(s_N-N m_*)^5}{N^4} + O(1)R_{s_N, N}\Bigr), 
		\end{align}
		where $$R_{s_N, N} = \frac{1}{N}+\frac{(S_N-Nm_*)^5}{N^5}+\frac{(S_N-N m_*)^6}{N^5}+ \frac{(S_N-N m_*)^{10}}{N^8} + \frac{(S_N-N m_*)^{11}}{N^9}.$$
		
		Next, note that 
		$$\sqrt{\frac{1}{1-m^2}} = \sqrt{\frac{1}{1-m_*^2}} + m_*(1-m_*^2)^{-\frac{3}{2}}\frac{s_N-Nm_*}{N} + O\left(\frac{(s_N-Nm_*)^2}{N^2}\right) . 
		$$
		Hence,
		$$\sqrt{\frac{1-m_*^2}{1-m^2}} = 1+ \frac{m_*}{1-m_*^2}\cdot\frac{s_N-Nm_*}{N} + O\left(\frac{(s_N-Nm_*)^2}{N^2}\right)~.$$
		Therefore, from \eqref{bdymnh}, 
		$$y_{m,N} =  e^{\frac{H^{(4)}(m_*)}{4!}\frac{(s_N-N m_*)^4}{N^3}}\Bigl( 1+ \frac{H^{(5)}(m_*)}{5!}\frac{(s_N-N m_*)^5}{N^4} + 
		\frac{m_*}{1-m_*^2}\cdot\frac{s_N-Nm_*}{N} +  O(1)U_{s_N, N}\Bigr),$$
		where 
		$$U_{s_N, N} = \frac{1}{N}+ \frac{(s_N-Nm_*)^2}{N^2} + \frac{(s_N-Nm_*)^5}{N^5}+\frac{(s_N-N m_*)^6}{N^5}+ \frac{(s_N-N m_*)^{10}}{N^8} + \frac{(s_N-N m_*)^{11}}{N^9}.$$
		
		 Defining the auxiliary variable $\ell := s_N - Nm_*$, we have
			\begin{align}\label{bn}
				B_N & = \sum\limits_{|m-m_*|< N^{-\frac{1}{4}+\varepsilon}, ~ m\in \mathcal{M}_N}y_{m,N}\\
				& = \sum\limits_{\substack{|\ell | < N^{\frac{3}{4}+\varepsilon} \\ 2| ( \ell + Nm_* + N)}} e^{\frac{H^{(4)}(m_*) \ell^4 }{4! N^3} } \left( 1+ \frac{H^{(5)}(m_*) }{5!}\frac{\ell^5}{N^4} + \frac{m_*}{1-m_*^2}\cdot\frac{\ell}{N} + O\left( R_{\ell, N} \right) \right) , \nonumber 
			\end{align}
			where 
			\begin{align}    
				R_{\ell, N} & := \frac{1}{N}+\frac{\ell^2}{N^2} +\frac{\ell^5}{N^5} + \frac{\ell^6}{N^5} + \frac{\ell^{10}}{N^8} + \frac{\ell^{11}}{N^9}  \nonumber \\ 
				& =   \frac{O(1)}{\sqrt N}\left( \frac{1}{\sqrt N}+\left(\frac{\ell}{N^{\frac{3}{4}}}\right)^2  + \frac{1}{N^{\frac{3}{4}}}\left(\frac{\ell}{N^{\frac{3}{4}}}\right)^5+\left(\frac{\ell}{N^{\frac{3}{4}}}\right)^6+\left(\frac{\ell}{N^{\frac{3}{4}}}\right)^{10}+ \frac{1}{N^{\frac{1}{4}}}\left(\frac{\ell}{N^{\frac{3}{4}}}\right)^{11} \right) . \nonumber 
		\end{align} 
		Therefore, from \eqref{bn}
		\begin{align}\label{eq:prbn}
			B_N & = \sum\limits_{\substack{| \ell | <N^{\frac{3}{4}+\varepsilon} \\ 2| ( \ell + Nm_* + N)}}p_1\left(\frac{\ell}{N^{\frac{3}{4}}}\right) + \frac{1}{N^{\frac{1}{4}}}\sum\limits_{\substack{| \ell | <N^{\frac{3}{4}+\varepsilon} \\ 2| ( \ell + Nm_* + N)}}p_2\left(\frac{\ell}{N^{\frac{3}{4}}}\right)+ \frac{O(1)}{\sqrt N}\sum\limits_{\substack{| \ell | <N^{\frac{3}{4}+\varepsilon} \\ 2| ( \ell + Nm_* + N)}}r\left(\frac{\ell}{N^{\frac{3}{4}}}\right), 
		\end{align}
		where
		$$
		\begin{aligned}
			p_1(t) &= e^{\frac{H^{(4)}(m_*)}{4!}t^4} , \\
			p_2(t) &=  e^{\frac{H^{(4)}(m_*)}{4!}t^4}\left(\frac{H^{(5)}(m_*)}{5!}t^5 + \frac{m_*}{1-m_*^2} t\right), \\
			r(t) &= \left(\frac{1}{\sqrt N} + t^2 + \frac{t^5}{N^{\frac{3}{4}} }  + t^6 + t^{10} + \frac{t^{11}}{N^{ \frac{1}{4}} } \right)e^{\frac{H^{(4)}(m_*)}{4!}t^4} . 
		\end{aligned}
		$$
		Using Lemma 2.1 (ii) in \cite{vanh} gives, 
		\begin{align}\label{eq:pt}
			\sum\limits_{\substack{|\ell |< N^{\frac{3}{4}+\varepsilon}\\ 2| (\ell+Nm_* + N)}}p\left(\frac{\ell}{N^{\frac{3}{4}}}\right) = \frac{N^{\frac{3}{4}}}{2}\int\limits_{-N^{\varepsilon}}^{N^{\varepsilon}}p(t)dt + O(N^{\varepsilon})\quad\quad \text{for}\quad p=p_1,p_2~\text{and}~r. 
		\end{align}
		Moreover, by Markov's inequality one has, 
		\begin{align}\label{eq:prt}
			\int\limits_{|t|\geq N^{\varepsilon}}p(t)dt = O\left(e^{-N^{3\varepsilon}}\right)\quad\quad \text{for}\quad p=p_1,p_2~\text{and}~r. 
		\end{align}
		Combining the bounds in \eqref{eq:pt} and \eqref{eq:prt} we get, 
		\begin{align}\label{eq:p}
			 \sum\limits_{\substack{|\ell |< N^{\frac{3}{4}+\varepsilon}\\ 2| (\ell+Nm_* + N)}}p\left(\frac{\ell}{N^{\frac{3}{4}}}\right) = \frac{N^{\frac{3}{4}}}{2}\int\limits_{-\infty}^{\infty}p(t)dt + O\left(N^{\varepsilon}\right)\quad\quad \text{for}\quad p=p_1,p_2~\text{and}~r.  
		\end{align}
		Combining \eqref{eq:prbn} with \eqref{eq:p}, we get: 
		\begin{align*}
			B_N = \frac{N^{\frac{3}{4}}}{2}\int\limits_{-\infty}^{\infty}p_1(t)dt +\frac{\sqrt N}{2}\int\limits_{-\infty}^{\infty}p_2(t)dt + O\left(N^{\frac{1}{4}}\right) = \frac{N^{\frac{3}{4}}}{2}\int\limits_{-\infty}^{\infty}p_1(t)dt\left[1+ O\left(\frac{1}{\sqrt N}\right)\right]. 
		\end{align*}  
		This completes the proof of \eqref{eq:BNintegral}.

		We now estimate $B_{N,x}$. With similar argument as in \eqref{bn}, we have
		\begin{equation*}
			\begin{aligned}
				B_{N,x} =& \sum\limits_{m\in \mathcal{M}_N}y_{m,N}\bm{1}\{ N^{-\frac{1}{4}}x<m-m_*<N^{-\frac{1}{4}+\varepsilon} \}\\
				= & \sum\limits_{\substack{N^{\frac{3}{4}}x < \ell < N^{\frac{3}{4}+\varepsilon} \\ 2| (\ell+Nm_* + N)}}p_1\left(\frac{\ell}{N^{\frac{3}{4}}}\right) + \frac{1}{N^{\frac{1}{4}}}\sum\limits_{\substack{N^{\frac{3}{4}}x < \ell < N^{\frac{3}{4}+\varepsilon} \\ 2| ( \ell + Nm_* + N)}}p_2\left(\frac{\ell}{N^{\frac{3}{4}}}\right)+ \frac{O(1)}{\sqrt N}\sum\limits_{\substack{N^{\frac{3}{4}}x < \ell < N^{\frac{3}{4}+\varepsilon} \\ 2| ( \ell + Nm_* + N)}}r\left(\frac{\ell}{N^{\frac{3}{4}}}\right).
			\end{aligned}
		\end{equation*}
		Clearly, there exists a constant $D>1$ such that $p_1$, $p_2$ and $r$ are decreasing on $[D, \infty)$. Applying Lemma 2.1 (i) in \cite{vanh}, we have for $x \geq D$,
		$$
		\begin{aligned}
			& B_{N,x} = \frac{N^{\frac{3}{4}}}{2}\int\limits_{x}^{N^{\varepsilon}}p_1(t)dt +\frac{\sqrt N}{2}\int\limits_{x}^{N^{\varepsilon}}p_2(t)dt + \\
			& \ \ \ \ \ \ O(1)\left( \frac{N^{\frac{1}{4}}}{2}\int\limits_{x}^{N^{\varepsilon}}r(t)dt + p_1(x) + \frac{p_2(x)}{N^{\frac{1}{4}}}+\frac{r(x)}{\sqrt N} + p_1(N^{\varepsilon})+\frac{p_2(N^{\varepsilon})}{N^{\frac{1}{4}}} + \frac{r(N^{\varepsilon})}{\sqrt N}\right),
		\end{aligned}
		$$
		 Moreover, 
			$$\max \left\{ p_1(N^{\varepsilon}),p_2(N^{\varepsilon}), r(N^{\varepsilon}), \int\limits_{N^{\varepsilon}}^{\infty}p_1(t)dt, \int\limits_{N^{\varepsilon}}^{\infty}p_2(t)dt, \int\limits_{N^{\varepsilon}}^{\infty}r(t)dt \right\} = O\left(e^{-N^{\zeta\varepsilon}}\right)$$ 
			where $\varepsilon = 0.24$ and $\zeta = 3.9$.   
		Then we get, 
		\begin{align*}
			B_{N,x} = \frac{N^{\frac{3}{4}}}{2}\hat{P}_1(x)+\frac{\sqrt N}{2}\hat{P}_2(x)+O(1)\left(N^{\frac{1}{4}}\hat{R}(x) + p_1(x) + \frac{p_2(x)}{N^{\frac{1}{4}}}+\frac{r(x)}{\sqrt N} \right) + O\left(e^{-N^{\zeta\varepsilon}}\right) . 
		\end{align*}
		This proves \eqref{eq:bnxdu}. 
		For $x\leq D$, we can use Lemma 2.2 (ii) in \cite{vanh} to get, 
		\begin{align*}
			B_{N,x} = \frac{N^{\frac{3}{4}}}{2}\hat{P}_1(x)+\frac{\sqrt N}{2}\hat{P}_2(x)+O\left(N^{\frac{1}{4}}\right).
		\end{align*} 
		This proves \eqref{eq:bnxd}, and completes the proof of Lemma \ref{lembn}.  
	\end{proof}

\end{document}